\renewcommand{\geq}{\geqslant}
\renewcommand{\leq}{\leqslant}
\newtheorem{theorem}{Theorem}
\newtheorem{proposition}{Proposition}[section]
\newtheorem{lemma}[proposition]{Lemma}
\newtheorem*{main-theorem}{Main Theorem}
\newtheorem*{theorem*}{Theorem}
\theoremstyle{definition}
\newtheorem{remark}[proposition]{Remark}
\newtheorem*{remark*}{Remark}
\numberwithin{equation}{section}
\def\phi{\varphi}
\def\p{\partial}
\def\ZZ{{\mathbb Z}}
\def\NN{{\mathbb N}}
\def\reals{{\mathbb R}}
\def\cx{{\mathbb C}}
\def\Ci{{\mathcal C}^\infty}
\def\Re{\,\mathrm{Re}\,}
\def\Im{\,\mathrm{Im}\,}
\def\sgn{\mathrm{sgn}\,}
\def\supp{\mathrm{supp}\,}
\def\id{\,\mathrm{id}\,}
\def\O{{\mathcal O}}
\def\s{{\mathcal S}}
\def\Op{\mathrm{Op}\,}
\def\csh{{\left( h/\tilde{h} \right)}}
\def\phi{\varphi}
\def\be{\begin{eqnarray*}}
\def\ee{\end{eqnarray*}}
\def\ben{\begin{eqnarray}}
\def\een{\end{eqnarray}}
\def\lll{\left\langle}
\def\rrr{\right\rangle}
\def\L2R{L_{\text{Rest}}^2}
\def\11{\mathds{1}}
\def\RR{\mathbb{R}}
\def\tpsi{\tilde{\psi}}
\def\L2c{L^2_{\text{comp}}}
\def\tg{\tilde{g}}
\def\th{\tilde{h}}
\def\tDelta{\widetilde{\Delta}}
\def\tP{\widetilde{P}}
\def\tu{\tilde{u}}
\def\tR{\tilde{R}}
\def\11{\mathbb{1}}
\def\Vol{\text{Vol}}
\def\uhi{u_{\text{hi}}}
\def\ulo{u_{\text{lo}}}
\def\tQ{\widetilde{Q}}
\def\tL{\tilde{L}}
\newcommand{\Lap}{\Delta}
\newcommand{\abs}[1]{{\left\lvert{#1}\right\rvert}}
\newcommand{\norm}[1]{{\left\lVert{#1}\right\rVert}}
\newcommand{\ang}[1]{{\left\langle{#1}\right\rangle}}
\newcommand{\pa}{{\partial}}
\newcommand{\ep}{{\epsilon}}
\newcommand{\hamvf}{{\textsf{H}}}
\newcommand{\B}{\mathcal{B}}
\begin{document}

\title[Local Smoothing with Loss]{Local smoothing for the
  Schr\"odinger equation with a prescribed loss}

\author[Christianson]{Hans~Christianson}
\email{hans@math.unc.edu}
\address{Department of Mathematics, UNC-Chapel Hill \\ CB\#3250
Phillips Hall \\ Chapel Hill, NC 27599}

\author[Wunsch]{Jared~Wunsch}
\email{jwunsch@\allowbreak math.\allowbreak northwestern.\allowbreak edu}

\subjclass[2000]{}
\keywords{}

\begin{abstract}
We consider a family of surfaces of revolution, each with a single periodic
geodesic which is degenerately unstable.  We prove a local smoothing estimate
for solutions to the linear Schr\"odinger equation with a loss that
depends on the degeneracy, and we construct explicit examples to show
our estimate is saturated on a weak semiclassical time scale.  As a
byproduct of our proof, we obtain a cutoff resolvent estimate with a
sharp polynomial loss.

\end{abstract}

\maketitle

\section{Introduction}
\label{S:intro}
Local smoothing for the linear Schr\"odinger equation has a long and
rich history.  First observed by Kato \cite{Kato} for the KdV equation
and later studied by Constantin-Saut \cite{ConSau}, Sj\"olin
\cite{Sjolin}, Vega \cite{Vega}, and Kato-Yajima \cite{KaYa-smooth} for the Schr\"odinger equation, the
local smoothing estimate expresses that, on average in time and
locally in space, solutions to a linear homogeneous dispersive
equation gain some regularity compared to the initial data.  Since
dispersive equations are time-reversible, the propagator at time $t$
preserves the initial energy, but local smoothing shows there is
greater regularity if we also integrate in time.

For the linear Schr\"odinger equation in $\reals^n$, it is well known
(see, for example, \cite{Tao-book})
that for any $\epsilon>0$, there exists $C>0$ such that one has the estimate
\[
\int_0^T \| \lll x \rrr^{-1/2 - \epsilon} e^{it \Delta} u_0
\|_{H^{1/2}}^2 dt \leq C \| u_0 \|_{L^2}^2.
\]
There are several ways to prove this estimate, the simplest of which
is a positive commutator method (see below), although estimates on the
cutoff free resolvent also imply this estimate (this argument seems to
have its origin in the work of Kato \cite{Ka-wos}; 
see also, for example,
\cite{Bur-sm,Chr-disp-1,Chr-sch2}).  
However, for the Schr\"odinger equation on a non-compact manifold, the
situation is not so simple.  A remarkable result of Doi \cite{Doi}
states that one has the sharp $H^{1/2}$ local smoothing effect on an
asymptotically Euclidean manifold if and
only if the geodesic flow is non-trapping.  That is, the presence of
geodesics which do not ``escape to infinity'' cause a loss in how much
regularity the solution can gain.  This has been generalized to
boundary value problems in \cite{Bur-sm}.  In the case of sufficiently
``thin'' hyperbolic trapped sets it has been demonstrated 
in \cite{Bur-sm,Chr-disp-1,Chr-sch2,Dat-sm} that one has only a
``trivial'' loss of $\epsilon>0$ derivatives.\footnote{In fact, with
  some care in definitions, the loss is only logarithmic.}  These examples include
Ikawa's examples \cite{Ika-2-wd,Ika-3-wd,Bur-sm}, a single periodic
hyperbolic geodesic (with or without boundary reflections)
\cite{Chr-disp-1}, very general fractal trapped sets without
boundary \cite{NoZw-res,Chr-sch2,Dat-sm}, and normally hyperbolic
trapped sets \cite{WuZw10}.  That is, in all of these cases,
the authors prove that for any $\epsilon>0$, there exists a constant
$C>0$ such that
\[
\int_0^T \| \lll x \rrr^{-1/2 - \epsilon} e^{it \Delta} u_0
\|_{H^{1/2-\epsilon}}^2 dt \leq C \| u_0 \|_{L^2}^2.
\]
In this case, we call the loss due to trapping ``trivial''.

To contrast, if a manifold admits an elliptic trapped set, the
existence of resonances converging exponentially to the real axis and
the existence of 
infinite order quasimodes
prevents polynomial gain in regularity.

The
purpose of this note is to exhibit a class of manifolds with only one
periodic geodesic which is weakly hyperbolic, and prove a (sharp)
local smoothing effect with loss that lies somewhere between the
complete loss of an elliptic trapped set and the trivial loss of a
strictly hyperbolic trapped set.

We consider the manifold $X = \reals_x \times \reals_\theta / 2 \pi
\ZZ$, equipped with a metric of the form
\[
ds^2 = d x^2 + A^2(x) d \theta^2,
\]
where $A \in \Ci$ is a smooth function, $A \geq \epsilon>0.$ 
From this metric, we get the volume form
\[
d \Vol = A(x) dx d \theta,
\]
and the Laplace-Beltrami operator acting on $0$-forms
\[
\Delta f = (\partial_x^2 + A^{-2} \partial_\theta^2 + A^{-1}
A' \partial_x) f.
\]
We observe that we can conjugate
$\Delta$ by an isometry of metric spaces and separate variables so
that spectral analysis of $\Delta$ is equivalent to a one-variable
semiclassical problem with potential.  That is, let $T : L^2(X, d
\Vol) \to L^2(X, dx d \theta)$ be the isometry given by
\[
Tu(x, \theta) = A^{1/2}(x) u(x, \theta).
\]
Then $\tDelta = T \Delta T^{-1}$ is essentially self-adjoint on $L^2 (
X, dx d \theta)$ with mild assumptions on $A$ (for example in this
paper $X$ has two ends which are short range perturbations of $\reals^2$).  A simple calculation gives
\[
-\tDelta f = (- \partial_x^2 - A^{-2}(x) \partial_\theta^2 + V_1(x) )
f,
\]
where the potential
\[
V_1(x) = \frac{1}{2} A'' A^{-1} - \frac{1}{4} (A')^2 A^{-2}.
\]

If we now separate variables and write $\psi(x, \theta) = \sum_k
\phi_k(x) e^{ik \theta}$, we see that
\[
(-\tDelta- \lambda^2) \psi = \sum_k e^{ik \theta} (P_k -\lambda^2)\phi_k(x),
\]
where
\[
(P_k -\lambda^2) \phi_k(x) = (-\frac{d^2}{dx^2} + k^2 A^{-2}(x) + V_1(x) - \lambda^2)
\phi_k(x).
\]
Setting $h = k^{-1}$, we have the semiclassical operator
\[
P(z,h) \phi(x) = (-h^2 \frac{d^2}{dx^2} + V(x) -z) \phi(x),
\]
where the potential is
\[
V(x) = A^{-2}(x) + h^2 V_1(x)
\]
and the spectral parameter is $z = h^2 \lambda^2$.  

In this paper, we are primarily interested in the case $A(x) = (1 + x^{2m})^{1/2m}$, $m
\in \ZZ_+$.  If $m \geq 2$, then $X$ is asymptotically Euclidean
(with two ends), and the subpotential $h^2 V_1(x)$ is seen to be
lower order in both the semiclassical and the scattering sense.  If $m
= 1$, a trivial modification must be made to make the metric a
short-range perturbation, but we completely ignore this issue here.  The
point is that for $m \geq 2$, the principal part of the potential $V(x)$
is $A^{-2}(x)$ which has a degenerate maximum at $x = 0$.  The
corresponding periodic geodesic $\gamma \subset X$ is {\it weakly} hyperbolic in
the sense that it is unstable and isolated, but degenerate (see Figure
\ref{fig:fig1}).

\begin{figure}
\hfill
\centerline{\input{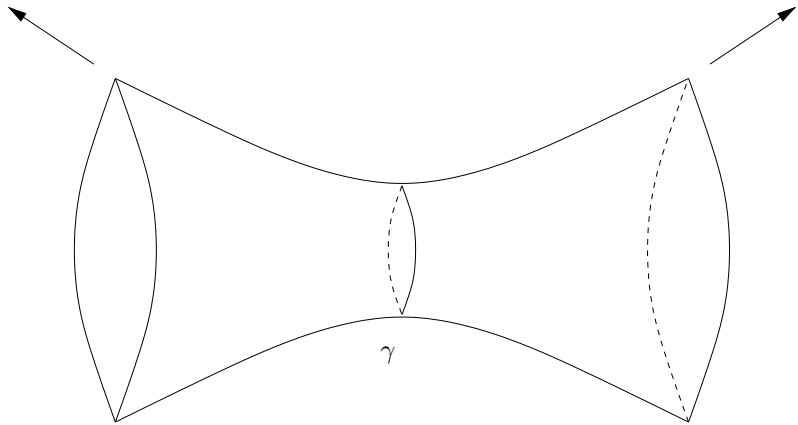}}
\caption{\label{fig:fig1} A piece of the manifold $X$ and the periodic
  geodesic $\gamma$.  The Gaussian curvature $K = -A''/A =
  -(2m-1)x^{2m-2} (1 + x^{2m})^{-2}$ vanishes to order $2m-2$ at $x =
  0$ and is asymptotically flat as $|x| \to \infty$  }
\hfill
\end{figure}

Our main result is the following theorem, which says that for every $m \geq 2$, there is still some
local smoothing, but with a polynomial loss depending on $m$.  

\begin{theorem}[Local Smoothing]
\label{T:smoothing}
Suppose $X$ is as above for $m \geq 2$, and assume $u$ solves
\[
\begin{cases} (D_t -\Delta ) u = 0 \text{ in } \reals \times X , \\
u|_{t=0} = u_0 \in H^{s}
\end{cases}
\]
for some $s \geq m/(m+1)$.  
Then for any $T<\infty$, there exists a constant
$C>0$ such that 
\[
\int_0^T  \| \lll x \rrr^{-3/2}  u \|_{H^1(X)}^2 \, dt \leq C (\| \lll D_\theta
\rrr^{m/(m+1)} u_0 \|_{L^2}^2 + \| \lll D_x \rrr^{1/2} u_0 \|_{L^2}^2).
\]

\end{theorem}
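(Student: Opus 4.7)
The plan is to combine separation of variables in $\theta$ with a $TT^*$ reduction of local smoothing to a semiclassical resolvent bound for the effective one-dimensional operator on each Fourier mode, and then to prove a sharp cutoff resolvent estimate with polynomial loss adapted to the degenerate trapped orbit.

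Expand $u(t,x,\theta) = \sum_{k \in \ZZ} e^{ik\theta}\phi_k(t,x)$, so that each $\phi_k$ satisfies $(D_t - P_k)\phi_k = 0$ with $P_k = -\partial_x^2 + k^2 A^{-2}(x) + V_1(x)$ from the excerpt. For $|k|\geq 1$ set $h = 1/|k|$; then $h^2 P_k = P(h) + h^2 V_1$ with $P(h) = -h^2 \partial_x^2 + A^{-2}(x)$ a semiclassical operator whose principal symbol $p(x,\xi) = \xi^2 + A^{-2}(x)$ has a single trapped trajectory at $(x,\xi) = (0,0)$ at the critical energy $E_0 = V(0) = 1$, arising from the degenerate maximum of $A^{-2}$ of order $2m$. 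Rewriting $e^{-itP_k}$ as an $h$-rescaled semiclassical evolution and applying Kato's $TT^*$ argument yields
\[
\int_0^T \|A_w \phi_k(t)\|_{L^2_x}^2\,dt \leq C h^2 \sup_{\nu \in \reals}\|A_w (P(h) - \nu - i0)^{-1} A_w^*\|_{L^2\to L^2}\|u_{0,k}\|^2
\]
for any bounded weight $A_w$, so a cutoff resolvent bound $\|\lll x \rrr^{-3/2}(P(h)-\nu-i0)^{-1}\lll x \rrr^{-3/2}\| \leq Ch^{-\beta}$ translates to $\int_0^T \|\lll x\rrr^{-3/2}\phi_k\|^2\,dt \leq Ch^{2-\beta}\|u_{0,k}\|^2$. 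Since the weighted $H^1(X)$-norm of $e^{ik\theta}\phi_k$ picks up a factor $k^2 = h^{-2}$ from $\partial_\theta$ (together with comparable radial contributions), the angular part of the mode contributes $Ch^{-\beta}\|u_{0,k}\|^2 = Ck^\beta \|u_{0,k}\|^2$, and summing in $k$ yields an $\|\lll D_\theta\rrr^{\beta/2} u_0\|^2$ bound. Thus the theorem reduces to the semiclassical resolvent estimate
\[
\sup_{\nu \in \reals}\|\lll x \rrr^{-3/2}(P(h) - \nu - i0)^{-1}\lll x\rrr^{-3/2}\|_{L^2\to L^2} \leq C h^{-2m/(m+1)}.
\]

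For $\nu$ bounded away from $E_0 = 1$ the classical flow of $p$ is non-trapping and the standard positive-commutator argument gives the stronger bound $O(h^{-1})$. Near $\nu = E_0$, the natural rescaling $x = h^{1/(m+1)}y$, $hD_x = h^{m/(m+1)}D_y$ reduces $P(h) - E_0$, modulo lower-order terms, to $h^{2m/(m+1)}$ times the model operator $P_0 = -\partial_y^2 - y^{2m}$. Since $-y^{2m} \to -\infty$ at infinity, every classical orbit of $P_0$ escapes to $\pm\infty$ and the limiting absorption resolvent of $P_0$ at energy $0$ is bounded in appropriately weighted $L^2(\reals_y)$. Transferring this bound through a microlocal normal form at the trapped point, and gluing via a partition of unity to the non-trapping estimate in the classically allowed region (with the radial escape in the two asymptotically Euclidean ends absorbed by the weight $\lll x\rrr^{-3/2}$), yields the claimed $h^{-2m/(m+1)}$ bound. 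The radial contribution $\|\lll D_x\rrr^{1/2}u_0\|^2$ in the theorem comes from the $\|\partial_x\phi_k\|$ part of the $H^1(X)$-norm and from modes where $|\xi|$ dominates the centrifugal barrier $k/A$, handled by the standard non-trapping one-dimensional $H^{1/2}$ local smoothing on the asymptotically Euclidean ends.

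The principal obstacle is the sharp cutoff resolvent estimate at the degenerate trapped energy. In the nondegenerate case ($m=1$) the classical commutator $\{p, -x\xi\}$ is positive definite near $(0,0)$ and yields a log-loss resolvent bound by the methods of Christianson and Wunsch--Zworski cited above; for $m \geq 2$ this commutator vanishes to order $2m$ at the trapped point, so the standard hyperbolic machinery fails. The remedy is a second-microlocal (rescaled) analysis at scale $h^{1/(m+1)}$, reducing the problem to the model operator $P_0$, plus a careful normal form whose error terms must be tracked to extract the sharp power $2m/(m+1)$ that is saturated by the quasimode examples constructed later in the paper.
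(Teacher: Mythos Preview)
Your overall architecture---Fourier decomposition in $\theta$, the $TT^*$ reduction to a cutoff semiclassical resolvent estimate for $P(h)=-h^2\partial_x^2+A^{-2}(x)$, and the observation that the only obstruction sits at the critical energy $E_0=1$ near $(x,\xi)=(0,0)$---agrees with the paper, and is the right way to organize the argument.

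The gap is in your treatment of the model problem.  After the rescaling $x=h^{1/(m+1)}y$ you arrive at $P_0=-\partial_y^2-y^{2m}$ and assert that ``every classical orbit of $P_0$ escapes to $\pm\infty$,'' hence the limiting-absorption resolvent at energy $0$ is bounded.  This is not correct: the origin $(y,\eta)=(0,0)$ is a fixed point of the Hamilton flow of $\eta^2-y^{2m}$, and the stable and unstable manifolds $\eta=\mp y^m$ consist of orbits that converge to it as $t\to\pm\infty$.  The rescaling does not remove the trapping; it merely normalizes it to an $h$-independent degenerate fixed point, so the non-trapping machinery you invoke does not apply.  There is also an analytic issue you have skipped: for $m\geq 2$ the potential $-y^{2m}$ is limit-circle at $\pm\infty$ (since $\int^\infty y^{-m}\,dy<\infty$), so $-\partial_y^2-y^{2m}$ is not essentially self-adjoint and the phrase ``limiting absorption resolvent of $P_0$'' has no meaning until boundary conditions (equivalently, a complex-scaling or gluing choice) are specified.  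Finally, even granting a definition, you would need the bound \emph{uniformly} for all rescaled energies $\mu\in\reals$, since $z\in[1-\epsilon,1+\epsilon]$ corresponds after rescaling to $\mu=(z-1)h^{-2m/(m+1)}$ ranging over the whole line.

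The paper handles this step quite differently.  Rather than appealing to a resolvent bound for the non-confining model, it runs a \emph{second positive-commutator argument} in a two-parameter $(h,\tilde h)$ calculus, with a commutant of the form $a=\Lambda(\Xi)\Lambda(X)\chi(x)\chi(\xi)$ in the blown-up coordinates $X=(\tilde h/h)^{1/(m+1)}x$, $\Xi=(\tilde h/h)^{m/(m+1)}\xi$.  The Poisson bracket $\hamvf_{q_1}(a)$ is then, up to controllable remainders, $(h/\tilde h)^{(m-1)/(m+1)}$ times a symbol $g\gtrsim \Xi^2+X^{2m}$ near the origin and elliptic away from it.  The crucial lower bound comes from the \emph{confining} anharmonic oscillator: $\Op_{\tilde h}^w(\Xi^2+X^{2m})\geq c\,\tilde h^{\,2m/(m+1)}$, which is just the ground-state energy of $-\partial_X^2+X^{2m}$ after rescaling.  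This yields $\|\tilde Q\varphi^w u\|\geq C h^{2m/(m+1)}\|\varphi^w u\|$ microlocally near $(0,0)$, from which the cutoff resolvent bound follows by the gluing argument you mention.  So the object that actually produces the exponent $2m/(m+1)$ is the spectral gap of the \emph{positive}-potential oscillator, not a non-trapping estimate for the negative one.  Your sketch would become a genuine proof if you replaced the non-trapping claim by either this positive-commutator argument or an explicit complex-scaling analysis of $P_0$ establishing the uniform bound; as written, the key inequality is asserted rather than proved.
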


\begin{remark}
Observe that there is no polynomial local smoothing effect 
in the limit $m \to \infty$.  In Theorem \ref{T:sharp} below, we show
Theorem \ref{T:smoothing} is sharp, and that in fact the estimate is
saturated on a weak semiclassical time scale.

\end{remark}

We are also able to prove, using the same techniques, a polynomial
bound on the resolvent of the Laplacian in the same geometric
setting.    We now assume for simplicity that our surface of
revolution is Euclidean at infinity, i.e.\ that $A(x) =
x$ for $\abs{x}\gg 0.$  (More generally we could just require
merely dilation analyticity at infinity; this would allow us to
include asymptotically conic spaces as treated in \cite{WZ}.)

We denote by $R(\lambda)$
the resolvent on $X$
\[
R(\lambda) = (-\Delta_g - \lambda^2)^{-1},
\]
where it exists.  If we take $\Im \lambda < 0$ as our physical sheet,
then, since $X$ is Euclidean near infinity, there is a meromorphic
continuation of $\chi R(\lambda) \chi$ to the logarithmic covering
space, for any $\chi \in \Ci_c(X)$ (see, e.g., \cite{SjZw}).
In particular, by choosing an
appropriate branch cut, $\chi R(\lambda) \chi$ continues
meromorphically to $\{ \lambda \in \reals, \lambda \gg 0 \}$.  

\begin{theorem}
\label{T:resolvent}
Fix $m \geq 2$.  For any $\chi \in \Ci_c(X)$, there exists a constant
$C= C_{m, \chi} >0$ such that
for $\lambda \gg 0$, 
\[
\| \chi R(\lambda-i0) \chi \|_{L^2 \to L^2} \leq C \lambda^{-2/(m+1)}.
\]
Moreover, this estimate is sharp, in the sense that no better
polynomial rate of decay holds true.

\end{theorem}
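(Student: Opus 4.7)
The plan is to reduce the cutoff resolvent bound on $X$ to a family of semiclassical one-dimensional problems via the angular Fourier decomposition, prove a sharp polynomial bound at the degenerate barrier top, and combine through the block-diagonal structure. Conjugating by the isometry $T$ and projecting onto the angular mode $e^{ik\theta}$ reduces $R(\lambda)$ to the direct sum of $(P_k - \lambda^2)^{-1}$. With $\tilde{h} := 1/\lambda$ as semiclassical parameter, write $P_k - \lambda^2 = \lambda^2 Q_k$ where
\[
Q_k = -\tilde{h}^2 \partial_x^2 + (k/\lambda)^2 A^{-2}(x) + \tilde{h}^2 V_1(x) - 1,
\]
so that the theorem is equivalent to the uniform bound
\[
\sup_{k \in \ZZ} \|\chi Q_k^{-1}\chi\|_{L^2 \to L^2} \leq C \tilde{h}^{-2m/(m+1)}.
\]

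The supremum is dictated by the critical modes with $k/\lambda \approx 1$: there the effective potential $(k/\lambda)^2 A^{-2}$ has maximum equal to the spectral value $1$, producing a degenerate hyperbolic equilibrium at $(x,\xi) = (0,0)$ of order $2m$, since $A^{-2}(x) - 1 = -x^{2m}/m + O(x^{2m+2})$. For all other modes, $Q_k$ is either non-trapping (if $k/\lambda < 1$, the energy lies above the barrier) or elliptic (if $k/\lambda > 1$), yielding the stronger bounds $O(\tilde{h}^{-1})$ and $O(1)$ respectively via standard positive commutator / Mourre arguments. To handle the critical regime, I would use a microlocal partition of unity: outside a neighbourhood of $(0,0)$ the classical flow is non-trapping, and a standard escape function / positive commutator argument gives the $O(\tilde{h}^{-1})$ contribution. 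Inside the neighbourhood, the change of variables $x = \tilde{h}^{1/(m+1)} y$ puts the kinetic term and the leading potential $-x^{2m}/m$ on the common scale $\tilde{h}^{2m/(m+1)}$; after dividing through, $Q_k$ becomes the model anharmonic-barrier operator $L = -\partial_y^2 - y^{2m}/m + \tilde z$ plus lower-order perturbations, with no remaining small parameter. Since $L$ is a fixed operator whose resonances lie at $O(1)$ distance from the real axis, its cutoff resolvent is $O(1)$ uniformly in the real spectral parameter $\tilde z$, which on unrescaling yields the $O(\tilde{h}^{-2m/(m+1)})$ bound. Gluing the local and non-trapping estimates, together with complex scaling at infinity (where the metric is Euclidean), produces the global bound.

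The main obstacle is making all this uniform in $k/\lambda$ as it traverses the critical window $|k/\lambda - 1| \lesssim \tilde{h}^{2m/(m+1)}$: within that window the resonances of $Q_k$ cluster against the real axis, and careful microlocal control of the trapped set is required, as is matching of the model resolvent with the non-trapping outer parametrix across the boundary of the rescaling region. Sharpness follows from a matching quasimode construction. Using a resonance-type state of the model $L$ (microlocally outgoing in $y$), then truncating and unrescaling, produces for a sequence $\lambda_j \to \infty$ and appropriate integers $k$ with $|k - \lambda_j|$ bounded a unit vector $\phi_j$ supported in a fixed compact set with $\|(P_k - \lambda_j^2)\phi_j\|_{L^2} \leq C \lambda_j^{2/(m+1)}$. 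The identity $\chi \phi_j = \chi(P_k - \lambda_j^2 - i0)^{-1}\chi(P_k - \lambda_j^2)\phi_j$ then forces $\|\chi R(\lambda_j - i0)\chi\|_{L^2 \to L^2} \geq c\lambda_j^{-2/(m+1)}$, matching the upper bound.
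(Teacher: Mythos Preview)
Your overall strategy is sound and would work, but it differs from the paper's in the key microlocal step, and there is one small mischaracterization to correct.

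\textbf{The minor correction.} For $k/\lambda>1$ the operator $Q_k$ is \emph{not} elliptic: the effective potential $(k/\lambda)^2A^{-2}$ tends to $0$ at infinity, so energy $1$ always lies in the continuous spectrum and the classically allowed region is $\{|x|\geq x_0(k/\lambda)\}$. This regime is again non-trapping (energy below the barrier top but above the bottom), and the correct bound there is $O(\tilde h^{-1})$, not $O(1)$. This does not affect your argument, since $O(\tilde h^{-1})$ is still stronger than the claimed $O(\tilde h^{-2m/(m+1)})$.

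\textbf{Comparison of the two approaches.} At the critical modes the paper does \emph{not} reduce to the model barrier $-\partial_y^2 - y^{2m}/m$ and invoke its resonance-free strip. Instead it proves a direct lower bound (Lemma~\ref{L:ml-inv}): for $\varphi$ microsupported near $(0,0)$,
\[
\|\tQ\,\varphi^w u\|\ \geq\ C\,h^{2m/(m+1)}\|\varphi^w u\|,
\]
via a positive commutator with the singular commutant $a=\Lambda(\Xi)\Lambda(X)\chi(x)\chi(\xi)$ in the two-parameter $(h,\tilde h)$ calculus; the anharmonic rescaling enters only to show the positivity of the commutator symbol (Lemma~\ref{lemma:positivity0}, which in turn rests on the elementary spectral gap $\langle(h^2D_x^2+x^{2m})u,u\rangle\geq c\,h^{2m/(m+1)}\|u\|^2$ of Lemma~\ref{L:Q-lower-bound}). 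Your route---rescale $x=\tilde h^{1/(m+1)}y$, obtain the fixed operator $L$, and bound its cutoff resolvent via complex scaling to the anharmonic oscillator---is equally valid and arguably more transparent, but it trades the commutator computation for a separate analysis of the model resolvent (uniformly in the rescaled spectral parameter $\tilde z$) together with a gluing argument to patch it to the non-trapping exterior. The paper also glues (citing \cite[Proposition~2.2]{Chr-disp-1}), so on that point the two proofs coincide. A further organizational difference: the paper takes $h=|k|^{-1}$ on the trapping block and $h=\lambda^{-1}$ on the non-trapping block, whereas you keep $\tilde h=\lambda^{-1}$ throughout; both choices work.

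\textbf{Sharpness.} Your direct quasimode argument is correct (a compactly supported $\phi_j$ is trivially outgoing, so $\phi_j=(P_k-\lambda_j^2-i0)^{-1}(P_k-\lambda_j^2)\phi_j$ holds). The paper takes a less direct route: it shows in Theorem~\ref{T:sharp} that the local smoothing estimate is saturated by an explicit WKB quasimode, and then observes via the $TT^*$ relation between resolvent and smoothing that any improved resolvent decay would contradict that saturation. Your argument short-circuits this detour, at the cost of needing the quasimode to be compactly supported with the right error size---which is exactly what the paper's WKB construction in \S3 provides.
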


\begin{remark}
The estimate in this theorem represents a loss over the non-trapping
case, when generally $\lambda^{-1}$ order bounds are known.  In the
non-degenerate hyperbolic trapping case ($m=1$), most known estimates
are of the order $\lambda^{-1} \log (\lambda)$, and in the elliptic
trapping cases, generally one expects at best exponential bounds.
Hence Theorem \ref{T:resolvent} represents a family of estimates with
a sharp polynomial loss.  To the best of our knowledge, no other such
examples are known.
\end{remark}

\subsection*{Acknowledgements}
The authors would like to thank J.~Marzuola, R.~Melrose, J.~Metcalfe,
and M.~Zworski for helpful conversations.  We are especially grateful
to N.~Burq for suggesting the model problem treated here as one which
might exhibit a finite loss of local smoothing.  We would also like to
thank Kiril Datchev for suggesting we write up the resolvent estimate
with loss.  Finally, we would like to thank the anonymous referee
whose careful reading of this manuscript has greatly helped improve
the presentation.

\section{Positive commutators}

\subsection{The smoothing estimate on Euclidean space}
In this section we write out the standard positive commutator proof of local smoothing for
the Schr\"odinger equation in polar coordinates.  We then try to mimic
the proof in the case of degenerate hyperbolic orbits ($m \geq 2$ above)
to see where the proof fails.

In polar coordinates, the homogeneous Schr\"odinger equation on
$\RR_t \times \RR^2$ is
\[
\begin{cases}
(D_t - \partial_r^2 - r^{-1} \partial_r - r^{-2} \partial_\theta^2)u =
0, \\
u|_{t = 0} = u_0;
\end{cases}
\]
we will of course write
$$
\Lap = \pa_r^2 + r^{-1} \pa_r +r^{-2} \pa_\theta^2.
$$
We recall that in polar coordinates the radial, or scaling, vector
field is $x
\cdot \partial_x = r \partial_r$.  By scaling, we immediately compute
$$
[r\pa_r, \Lap] =2 \Lap;
$$
however, as $r \pa_r$ is not a bounded map between Sobolev spaces, we
change the weight and
employ the commutant $B = r \lll r
\rrr^{-1} \partial_r$. 
 The function $a(r) = r \lll r \rrr^{-1}$ is
non-negative and bounded, and satisfies $a'(r) = \lll r \rrr^{-3}$.
Thus, we compute
\begin{align}
[B, \Lap] & = 
2 a' \partial_r^2 + (a''+ a' r^{-1} + a r^{-2}) \partial_r + 2 a
r^{-3} \partial_\theta^2 \label{foobar}\\
& = 2 \lll r \rrr^{-3} \partial_r^2 + 2 \lll r \rrr^{-1}
r^{-2} \partial_\theta^2+O(r^{-1} \ang{r}^{-1}) \pa_r. \notag
\end{align}
Using the Schr\"odinger equation, we write
\begin{align*}
  0  = & 2 i \Im \int_0^T \lll B (D_t -\Lap)u, u \rrr dt \\
 = & \int_0^T \lll B (D_t - \partial_r^2 -
  r^{-1} \partial_r - r^{-2} \partial_\theta^2)u, u \rrr \\
& - \lll u, B (D_t - \partial_r^2 -
  r^{-1} \partial_r - r^{-2} \partial_\theta^2)u \rrr dt \\
 = & \int_0^T \lll [B, ( - \partial_r^2 -
  r^{-1} \partial_r - r^{-2} \partial_\theta^2)]u, u \rrr dt + i
  \left. \lll
  B u , u \rrr \right|_0^T.
\end{align*}
The last term is bounded using energy estimates by
\[
\left| \left. \lll
  B u , u \rrr \right|_0^T \right| \leq \| u_0 \|_{H^{1/2}}^2.
\] 
Rearranging, we thus obtain
$$
\int_0^T \lll [B, \Lap]u, u \rrr dt\leq C_T \norm{u_0}_{H^{1/2}}^2.
$$
Employing \eqref{foobar} and integrating by parts thus yields
$$
\int_0^T \norm{\ang{r}^{-3/2}\pa_r u}^2 + \norm{\ang{r}^{-1/2} r^{-1}\pa_\theta
  u}^2\, dt
\leq C_T \norm{u_0}_{H^{1/2}}^2.
$$
where we have absorbed on the right the term involving $\int_0^T
\ang{\pa_r u,u} \, dt$ as well as the similar error terms from
commuting $\pa_r$ with a multiplier. This is the local smoothing
estimate on the manifold $\reals^2$.

\subsection{Degenerate hyperbolic trapping}
In this section, we prove our main local smoothing estimate.



Let us begin by reproducing the positive commutator computation in the
previous section for the degenerate case.  Let $A(x) = (1 +
x^{2m})^{1/2m}$, the metric $ds^2 = dx^2 + A^2 d \theta^2$ as before,
and conjugate the Laplacian to Euclidean space:
\[
-\tDelta f = (- \partial_x^2 - A^{-2}(x) \partial_\theta^2 + V_1(x) )
f,
\]
where the potential
\[
V_1(x) = \frac{1}{2} A'' A^{-1} - \frac{1}{4} (A')^2 A^{-2}.
\]

The following proposition is the statement of local smoothing for the
conjugated equation, and evidently implies Theorem \ref{T:smoothing}
by conjugating back.
\begin{proposition}
\label{P:smoothing}
Suppose $m \geq 2$ and $u$ solves 
\begin{equation}
\label{E:tDelta-Sch}
\begin{cases} (D_t -\tDelta ) u = 0, \\
u(0,x, \theta) = u_0.
\end{cases}
\end{equation}
Then for any $T<\infty$ there exists a constant
$C>0$ such that 
\begin{align*}
\int_0^T & ( \| \lll x \rrr^{-1} \partial_x u \|_{L^2}^2 + \| \lll x
\rrr^{-3/2} \partial_\theta u \|_{L^2}^2 ) \, dt \\
& \leq C (\| \lll D_\theta
\rrr^{m/(m+1) } u_0 \|_{L^2}^2 + \| \lll D_x \rrr^{1/2} u_0 \|_{L^2}^2).
\end{align*}
\end{proposition}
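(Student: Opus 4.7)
My plan is to decompose $u$ into angular Fourier modes, run the polar commutator argument of the previous subsection mode-by-mode, and combine the residual near-trapping error with a cutoff resolvent estimate that encodes the degenerate dynamics. Write $u(t,x,\theta) = \sum_{k\in\ZZ}\phi_k(t,x)\,e^{ik\theta}$, so each mode solves $(D_t - \tDelta_k)\phi_k = 0$ with $\tDelta_k = \partial_x^2 - k^2 A^{-2}(x) - V_1(x)$. By Parseval both sides of the proposition decompose orthogonally over $k$, so it suffices to prove for each $k$
\[
\int_0^T \bigl(\|\ang{x}^{-1}\partial_x\phi_k\|^2 + k^2\|\ang{x}^{-3/2}\phi_k\|^2\bigr)\,dt \leq C\bigl(\ang{k}^{2m/(m+1)}\|\phi_k(0)\|^2 + \|\ang{D_x}^{1/2}\phi_k(0)\|^2\bigr).
\]
For $|k|\lesssim 1$ this is standard 1D smoothing; for $|k|\gg 1$ I set $h = 1/|k|$ and work with the semiclassical operator $P(h) = -h^2\partial_x^2 + A^{-2}(x) + h^2 V_1(x)$, so that $-\tDelta_k = h^{-2}P(h)$.

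For each mode I repeat the Euclidean commutator calculation with $B = -i(a\partial_x + \partial_x a)$ and $a(x) = x\ang{x}^{-1}$. Since $\tDelta_k$ differs from the polar Laplacian only by the replacement $r^{-2}\mapsto A^{-2}(x)$, the principal symbol of $i[-\tDelta_k, B]$ works out to
\[
4a'(x)\xi^2 + 4x^{2m}\ang{x}^{-1}(1+x^{2m})^{-(m+1)/m}k^2,
\]
a non-negative weighted quadratic form. The $\xi^2$-coefficient produces the $\|\ang{x}^{-3/2}\partial_x\phi_k\|^2$ term (already stronger than the $\ang{x}^{-1}$ weight required), while the $k^2$-coefficient $w(x)^2$ satisfies $w(x)\asymp\ang{x}^{-3/2}$ for $|x|\gg 1$ but only $w(x)\asymp x^m$ near the trapped set. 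The usual energy identity, with boundary term bounded via energy conservation by $\|\ang{D_x}^{1/2}\phi_k(0)\|^2$, then yields
\[
\int_0^T \bigl(\|\ang{x}^{-3/2}\partial_x\phi_k\|^2 + k^2\|w(x)\phi_k\|^2\bigr)\,dt \leq C\|\ang{D_x}^{1/2}\phi_k(0)\|^2.
\]
What remains is $k^2\int_0^T\chi^2(x)|\phi_k|^2\,dt$ for a fixed cutoff $\chi$ supported near $x=0$, since the weight $w$ degenerates to order $x^m$ there.

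To recover this missing piece I would invoke the semiclassical cutoff resolvent estimate underlying Theorem~\ref{T:resolvent}: for $\chi\in\Ci_c(\RR)$ and $z$ in a real neighborhood of the trapping energy $1$,
\[
\|\chi(P(h)-z-i0)^{-1}\chi\|_{L^2\to L^2} \leq Ch^{-2m/(m+1)}.
\]
Rescaling time by $\tau = k^2 t$ so that $\phi_k(t) = e^{i\tau P(h)}\phi_k(0)$, the weighted Kato smoothing inequality (via $TT^*$ and Plancherel in $\tau$) converts this into $\int_\RR \|\chi e^{i\tau P(h)}\phi_k(0)\|^2\,d\tau \leq Ch^{-2m/(m+1)}\|\phi_k(0)\|^2$. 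Undoing the rescaling and multiplying by $k^2$ produces $k^2\int_0^T \|\chi\phi_k\|^2\,dt \leq Ck^{2m/(m+1)}\|\phi_k(0)\|^2$, which is exactly the $\ang{k}^{2m/(m+1)}$ loss on the right-hand side. Combined with the commutator output, this closes the per-mode estimate, and summing in $k$ finishes the proof.

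The hard part is the semiclassical resolvent bound $O(h^{-2m/(m+1)})$: it is precisely where the degeneracy order $2m$ enters quantitatively, interpolating between the non-trapping rate $O(h^{-1})$ and the exponential obstruction present for elliptic wells. I would derive it by an Agmon/escape-function construction adapted to the natural spatial scale $x\sim h^{1/(m+1)}$ near the degenerate barrier top, on which the WKB phase $x^{m+1}/(h(m+1))$ becomes of order one.
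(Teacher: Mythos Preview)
Your strategy coincides with the paper's: a positive commutator gives perfect $\partial_x$-smoothing and $\theta$-smoothing with a weight degenerating like $x^m$ at the trapped set, and the missing piece $k^2\int_0^T\|\chi\phi_k\|^2\,dt$ is then recovered from a cutoff semiclassical resolvent bound via a $TT^*$/Kato smoothing argument. The choice of commutant ($x\ang{x}^{-1}$ versus the paper's $\arctan x$) and the order in which you Fourier-decompose are cosmetic.

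Two points where your sketch diverges from the paper are worth flagging. First, the paper inserts an additional frequency cutoff $\psi(D_x/k)$ before the $TT^*$ step, splitting $u_k$ into a low-frequency piece (controlled by the already-established $\partial_x$-smoothing) and a high-frequency piece microsupported near $(x,\xi)=(0,0)$. This lets them invoke the resolvent bound only for $z$ in a small neighbourhood of the trapping energy $1$, using elliptic regularity elsewhere. You skip this and feed the full $\phi_k$ into Kato smoothing, which is fine but then requires the bound $\|\chi(P(h)-z\mp i0)^{-1}\chi\|\leq Ch^{-2m/(m+1)}$ \emph{uniformly for all} $z\in\RR$, not just $z$ near $1$; you should say explicitly that away from $z=1$ the energy is non-trapping and the resolvent is $O(h^{-1})$, so the worst case is indeed $z\sim 1$.

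Second, for the resolvent bound itself the paper does not use an Agmon-type argument. It proves a microlocal lower bound $\|\tQ\,\phi^w u\|\geq C h^{2m/(m+1)}\|\phi^w u\|$ via a positive commutator with a commutant that is singular on exactly the scale you identify: in blown-up coordinates $X=x/(h/\th)^{1/(m+1)}$, $\Xi=\xi/(h/\th)^{m/(m+1)}$ one takes $a=\Lambda(\Xi)\Lambda(X)$ with $\Lambda$ a bounded monotone function, and handles the marginal $h^{-1/2}$-type calculus by introducing a second small parameter $\th$ (the Sj\"ostrand--Zworski two-parameter calculus). The lower bound for the resulting commutator reduces to the spectral gap $\lambda_0>0$ of the anharmonic oscillator $-\partial_X^2+X^{2m}$. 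Your heuristic about the scale $x\sim h^{1/(m+1)}$ is correct, but ``Agmon/escape-function'' undersells what is actually needed to make the commutator positive through the critical point.
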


\subsection{Proof of Proposition \ref{P:smoothing}}

Let us summarize briefly the strategy of the proof.  Using a positive
commutator argument similar to the previous section, we prove local
smoothing except at the periodic orbit $\gamma = \{x = 0 \}$.
Moreover, solutions to \eqref{E:tDelta-Sch} exhibit perfect local
smoothing in the $x$ direction and only lose smoothing in the
directions tangential to $\gamma$ (that is, only in the $\theta$
direction).  Thus it suffices to prove local smoothing with a loss for
$\theta$ derivatives, in a neighbourhood of $x = 0$.  We separate
variables in the $\theta$ direction (Fourier series decomposition) and
prove estimates uniform in each Fourier mode.  To do this, we further
decompose, say, the $k$th Fourier mode into a low-frequency part where $|k| \leq
|D_x|$ and a high-frequency part where $|D_x| \leq |k|$.  The low
frequency part is estimated using the positive commutator technique
modulo a term which is localized to high-frequencies, so it suffices
to estimate a solution cut off to high frequencies.  For this, we
introduce a semiclassical rescaling, and 
reduce the estimate to a cutoff semiclassical resolvent estimate,
which implies local smoothing via \cite[Theorem 1]{Chr-sch2}.

\noindent {\bf Step 1: Positive commutators and the estimate away from $x=0$.}

If $B = \arctan(x) \partial_x$, we have
\[
[\tDelta, B] = 2 \lll x \rrr^{-2} \partial_x^2 - 2 x \lll x
\rrr^{-4} \partial_x + 2 A' A^{-3} \arctan(x) \partial_\theta^2 + V_1'
\arctan(x).
\]
Now 
\[
iB- (iB)^* = i[\arctan(x) , \partial_x]
\]
is $L^2$ bounded, so 
\begin{align*}
0  =&  \int_0^T \int u \overline{ ( \arctan(x) D_x (D_t - \tDelta) u)} dx
d \theta \, dt \\
 =&  \int_0^T \int \arctan(x) D_x u \overline{ ( (D_t - \tDelta) u)} dx
d \theta \, dt
\\
& + \int_0^T \int (iB-(iB)^*) u \overline{ ( (D_t - \tDelta) u)} dx
d \theta \, dt \\
 = & i \lll \arctan(x) D_x  u , u \rrr|_0^T + \int_0^T \lll (D_t -
\tDelta) i^{-1} B u, u \rrr \, dt.
\end{align*}
Hence, using the notation $P = D_t - \tDelta$,
\begin{align*}
0 & = 2 i \Im \int_0^T \lll i^{-1} B P u, u \rrr \, dt \\
& = \int_0^T \lll i^{-1} B P u, u \rrr \, dt - \int_0^T \lll u, i^{-1} B
P u \rrr \, dt \\
& = \int_0^T \lll [i^{-1}B,P] u, u \rrr \, dt - i \lll \arctan(x) D_x u , 
u \rrr|_0^T ,
\end{align*}
or
\[
 \int_0^T \lll [B,-\tDelta] u, u \rrr \, dt = - \lll \arctan(x) D_x u , 
u \rrr|_0^T,
\]
since $B$ does not depend on $t$.  By writing $\partial_x = \lll D_x
\rrr^{1/2} \lll D_x \rrr^{-1/2} \partial_x$, and using energy
estimates, we can control the right hand side by $\| u_0
\|_{H^{1/2}}^2$.  The left hand side is computed as above:
\begin{align*}
 \int_0^T & \lll [B,-\tDelta] u, u \rrr \, dt \\
 = & \int_0^T \Big\langle ( 2 \lll x \rrr^{-2} \partial_x^2 - 2 x \lll x
\rrr^{-4} \partial_x + 2 A' A^{-3} \arctan(x) \partial_\theta^2 \\
& + V_1'
\arctan(x)) u, u \Big\rangle \, dt.
\end{align*}
Using the energy estimates, 
\begin{align}
\Big| \int_0^T & \lll ( - 2 x \lll x
\rrr^{-4} \partial_x + V_1'
\arctan(x)) u, u \rrr \, dt \Big| \leq C T\sup_{0 \leq t \leq T } \|
u(t) \|_{H^{1/2}}^2 \notag \\
& \leq C_T \| u_0 \|_{H^{1/2}}^2. \label{E:energy-RHS}
\end{align}

Integrating by parts in $x$ and $\theta$ and adding
the lower order terms into the right hand side as in
\eqref{E:energy-RHS} yields the estimate
\begin{equation*}
\int_0^T (\| \lll x \rrr ^{-1} \partial_x u \|_{L^2}^2 + \|\sqrt{A' A^{-3} \arctan(x) } \partial_\theta u \|_{L^2}^2 ) \, dt \leq C \| u_0
\|_{H^{1/2}}^2.
\end{equation*}
We observe that 
\[
A' A^{-3} \arctan(x) = \arctan(x) x^{2m-1} (1 + x^{2m})^{-1/m -1}
\]
is even, non-negative, bounded below by $C|x|^{2m}$ for $|x| \leq 1$ and
$C'|x|^{-3}$ for $|x| \geq 1$.  Hence 
\[
|x|^{2m} \lll x \rrr^{-2m-3} \leq CA' A^{-3} \arctan(x) ,
\]
and hence, 
\[
\lll |x|^{2m} \lll x \rrr^{-2m-3} \partial_\theta u, \partial_\theta u
\rrr \leq C \lll A' A^{-3} \arctan(x) \partial_\theta
u, \partial_\theta u \rrr
\]
plus terms which can be absorbed into the energy, 
so up to lower order terms, 
\[
\| |x|^{m} \lll x \rrr^{-m-3/2} \partial_\theta u \| \leq C \|
\sqrt{A' A^{-3} \arctan(x) } \partial_\theta u \|.
\]
Hence we have the estimate
\begin{equation}
\label{E:est-away-0}
\int_0^T (\| \lll x \rrr ^{-1} \partial_x u \|_{L^2}^2 + \| |x|^{m}
\lll x \rrr^{-m-3/2} \partial_\theta u \|_{L^2}^2 ) \, dt \leq C \| u_0
\|_{H^{1/2}}^2.
\end{equation}

\noindent {\bf Step 2: Low frequency estimate.}

The estimate \eqref{E:est-away-0} shows we have perfect local
smoothing away from the periodic geodesic at $x=0$, and moreover shows
we have perfect local smoothing in the $x$ direction.  That is, the
only loss is in the direction tangential to the periodic geodesic,
which is expected since a point in $T^*X$ which is transversal to the
periodic geodesic will flow out to infinity, and only the tangential
directions stay localized for long times.  In this subsection we show
how to get an estimate in the tangential directions with a loss.

Let us decompose
\[
u(t,x,\theta) = \sum_k e^{ik \theta} u_k(t, x),
\]
and
\[
u_0(x, \theta) = \sum_k e^{ik \theta} u_{0,k}(x).
\]
By orthogonality it suffices to prove Proposition \ref{P:smoothing}
for each mode.  
Observe the zero mode $u_0(t,x)$ satisfies
\[
\begin{cases}
(D_t -\tDelta) u_0(t,x) = 0, \\
u_0(0,x) = u_{0,0}(x),
\end{cases}
\]
and $\p_\theta u_0(t,x) = 0$, so that, from Step 1, we have
\begin{align*}
\int_0^T & ( \| \lll x \rrr^{-1} \partial_x u_0 \|_{L^2}^2 + \| \lll x
\rrr^{-3/2} \partial_\theta u_0 \|_{L^2}^2 ) \, dt = \int_0^T  \| \lll
x \rrr^{-1} \partial_x u_0 \|_{L^2}^2  \, dt \\
& \leq \int_0^T (\| \lll x \rrr ^{-1} \partial_x u_0 \|_{L^2}^2 + \| |x|^{m}
\lll x \rrr^{-m-3/2} \partial_\theta u_0 \|_{L^2}^2 ) \, dt \\
& \leq C \| u_{0,0}
\|_{H^{1/2}}^2\\
& \leq C (\| \lll D_\theta
\rrr^{m/(m+1) } u_0 \|_{L^2}^2 + \| \lll D_x \rrr^{1/2} u_0 \|_{L^2}^2).
\end{align*}
To prove Proposition \ref{P:smoothing} for the nonzero modes, we 
show
\[
\int_0^T \| \chi(x) k u_k \|_{L^2(\reals)}^2 \, dt \leq C (\| \lll k \rrr^{m/(m+1)}
u_{0,k} \|_{L^2}^2 + \| u_{0,k} \|_{H^{1/2}}^2 )
\]
for some $\chi \in \Ci_c( \reals)$ with $\chi (x) \equiv 1$ near $x =
0$, for $| k | \geq 1$.

For simplicity in exposition, let us drop the $k$ notation for $u$ and
$u_0$, and just observe that now the time-dependent Schr\"odinger
operator depends on $k$:
\[
D_t + P_k = D_t -\tDelta =  D_t + D_x^2 + A^{-2}(x) k^2 + V_1(x) .
\]
The idea is to use $k^{-1}$ as a semiclassical parameter, and
decompose $u$ into a part where $|k| \leq |D_x|$ and a part where $k$
is not controlled by $D_x$.  It turns out that then the part not
controlled by $D_x$ can be handled with a second commutator argument
plus a $T T^*$ argument (see Step 3 below
and \S \ref{SS:ml-res-est-pf-ssect}).  

Let $\psi \in \Ci_c( \reals)$ be an even function satisfying $\psi (r)\equiv 1$ for $| r |
\leq 1$ and $\psi(r) \equiv 0$ for $| r | \geq 2$.  Let
\[
u = \uhi + \ulo,
\]
where
\[
\uhi = \psi(D_x/k) u, \,\,\, \ulo = (1 - \psi(D_x/k)) u.
\]
Observe $\ulo$ satisfies the equation
\[
(D_t + P_k )\ulo = -[P_k, \psi(D_x/k)] u = k \lll x \rrr^{-3} L \tpsi(D_x/k) u,
\]
where $L$ is $L^2$ bounded and $\tpsi \in \Ci_c$ is $1$ on $\supp
\psi$.  If we try to apply the positive commutator argument from the previous
step to $\ulo$, we now have
\begin{align*}
2 i \Im &\int_0^T \lll i^{-1} B k \lll x \rrr^{-3} L \tpsi(D_x/k) u,
\ulo \rrr \, dt \\
  = & 2 i
\Im \int_0^T \lll i^{-1} B (D_t + P_k) \ulo, \ulo \rrr \, dt \\ 
 = & \int_0^T \lll i^{-1} B (D_t + P_k )\ulo, \ulo \rrr \, dt - \int_0^T \lll \ulo, i^{-1} B
(D_t + P_k) \ulo \rrr \, dt \\
& -  \int_0^T \int (iB-(iB)^*) \ulo \overline{  (D_t + P_k) \ulo } dx
d \theta \, dt  \\
 = & \int_0^T \lll [i^{-1}B,P_k] u, u \rrr \, dt - i \lll \arctan(x) D_x \ulo , 
\ulo \rrr|_0^T \\
& - i \int_0^T \lll \lll x \rrr^{-2} \ulo, k \lll x
\rrr^{-3} L \tpsi(D_x/k) u \rrr \, dt,
\end{align*}
or
\begin{align*}
& \left| \int_0^T  \lll [i^{-1}B,P_k] u, u \rrr \, dt \right|  \\ &
\quad \leq   C \Bigg(
  \left| \int_0^T \lll i^{-1} B k \lll x \rrr^{-3} L \tpsi(D_x/k) u,
\ulo \rrr \, dt \right| \\
& \quad \quad + \left| \lll \arctan(x) D_x \ulo , 
\ulo \rrr|_0^T\right|  + \left|  \int_0^T \lll \lll x \rrr^{-2} \ulo, k \lll x
\rrr^{-3} L \tpsi(D_x/k) u \rrr \, dt \right| \Bigg) \\
& \quad \leq  C \Big(  \| k \lll x \rrr^{-3/2} \tpsi(D_x/k) u \|^2 + \| \lll
x \rrr^{-3/2} D_x u \|^2 + \| u_0 \|_{H^{1/2}}^2    \Big),
\end{align*}
where we have again used the energy estimate where appropriate.  
The right hand side is now
controlled by 
\eqref{E:est-away-0} except near $x=0$.  What we have gained is the
cutoff in frequency $\tpsi(D_x/k)$.  Clearly if we can show for any
$\chi \in \Ci_c$, $\chi \equiv 1$ near $x=0$, 
\[
\int_0^T \| \chi k \tpsi(D_x/k) u \|_{L^2}^2\, dt \leq C \| k^{m/(m+1)} u_0 \|_{L^2}^2
\]
we can control the remaining term from the estimate on $\ulo$ as well
as the estimate of $\uhi$.

\noindent {\bf Step 3: The high frequency estimate.}

Let us now try to estimate $\uhi$ near $x=0$, or more generally a
solution to $(D_t + P_k) u =0$ microlocalized near $(0,0)$.  
For some $0 \leq r \leq 1/2$ to be determined, let $F(t)$ be defined by
\[
F(t) g = \chi(x) \psi(D_x/k) k^r e^{-itP_k} g,
\]
where $e^{-itP_k}$ is the free propagator.  Our goal is to determine for
what values of $r$ we have a mapping $F: L^2_x \to L^2([0,T]) L^2_x$,
since then 
\begin{equation}
\label{E:l-sm-r}
\| k^{1-r} F(t) u_0 \|_{L^2([0,T]);L^2} \leq C \| k^{1-r} u_0 \|_{L^2}
\end{equation}
is the desired local smoothing estimate.  
We have such a mapping if and only if $F F^* : L^2 L^2 \to L^2 L^2$.
We compute
\[
F F^* f(x,t) = \psi(D_x/k) \chi (x)  k^{2r} \int_0^T e^{i(t-s)P_k} \chi(x)
\psi(D_x/k) f(x,s)ds,
\]
and it suffices to estimate $\| FF^* f \|_{L^2 L^2} \leq C \| f
\|_{L^2 L^2}.$
We write $F F^* f(x,t) = \psi \chi  (v_1 + v_2)$, where
\[
v_1 = k^{2r} \int_0^t e^{i(t-s)P_k} \chi(x)
\psi(D_x/k) f(x,s)ds,
\]
and
\[
v_2 = k^{2r} \int_t^T e^{i(t-s)P_k} \chi(x)
\psi(D_x/k) f(x,s)ds,
\]
so that
\[
(D_t + P_k)v_j = \pm i k^{2r} \chi \psi f,
\]
and it suffices to estimate 
\[
\| \psi \chi v_j \|_{L^2 L^2} \leq C \| f \|_{L^2 L^2}.
\]
Since the Fourier transform in time is an $L^2$ isometry, it suffices
to estimate
\[
\|\psi \chi \hat{v}_j \|_{L^2 L^2} \leq C \| \hat{f} \|_{L^2 L^2},
\]
but this is the same as estimating
\[
\| \psi \chi k^{2r}(\tau \pm i 0+ P_k)^{-1} \chi \psi \|_{L^2_x \to L^2_x} \leq
C.
\]
Let us factor out the $k^2$ in $P_k$ to get the operator
\[
k^{-2r} (\tau \pm i 0+ P_k) = k^{2(1-r)}(-z \pm i 0+ k^{-2}D_x^2  + A^{-2}(x) + k^{-2}
V_1(x))
\]
for $-z = \tau k^{-2}$,
and if we let $h = k^{-1}$, we are left with the task of finding $r$
so that 
\[
\| \psi(hD_x) \chi(x) (-z \pm i 0+ (hD_x)^2 + V)^{-1} \chi(x) \psi(hD_x)
\|_{L^2 \to L^2} \leq C h^{-2(1-r)},
\]
where $V = A^{-2}(x) + h^2 V_1(x)$.  Let $$\tQ =  (hD_x)^2 + V-z.$$
We observe that the cutoff $\psi(hD_x)\chi(x)$ shows we only need to estimate
this for $z$ in a bounded interval near $z = 1$.  Indeed, $\psi \chi$
cuts off to a neighbourhood of $(0,0)$, and $V(0) = 1$, so for $|
z-1|$ sufficiently large, we have elliptic regularity.  The cutoff estimate
on $\tQ$ is the content of the following Proposition, which
is proved in the next subsection.

\begin{proposition}
\label{P:ml-res-est}
Let $\phi \in \Phi^0$ have wavefront set sufficiently close to
$(0,0)$.  Then for each $\epsilon>0$ sufficiently small, there exists
a constant $C>0$ such that
\[
\| \phi (\tQ \pm i 0 )^{-1} \phi \|_{L^2 \to L^2} \leq C h^{-2m/{m+1}}, \,\,\, z
\in [1-\epsilon, 1 + \epsilon].
\]
\end{proposition}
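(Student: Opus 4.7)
The plan is a semiclassical rescaling that turns $\tQ$ into a power of $h$ times an $h$-independent model operator near the degenerate critical point at $(0,0)$.  One may first restrict to $z$ in a small neighborhood of $1$: for $|z-1|$ bounded below by a fixed positive constant, the symbol $\xi^2 + V(x) - z$ is elliptic on the wavefront set of $\phi$ (since $V(0) = 1$ and $\phi$ is supported near $(0,0)$), and standard semiclassical elliptic regularity bounds $\phi \tQ^{-1} \phi$ on $L^2$ with no loss at all.

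For $|z-1|$ small, write $z = 1 + h^{2m/(m+1)}\zeta$ (with $\zeta$ in a bounded set, the two regimes joined by a standard gluing) and rescale via $x = h^{1/(m+1)} y$, so that $hD_x = h^{m/(m+1)} D_y$.  Using $V(x) = 1 - x^{2m}/m + O(x^{4m}) + h^2 V_1(x)$ near $x = 0$, this produces
\[
\tQ = h^{2m/(m+1)} \bigl( D_y^2 - \tfrac{1}{m} y^{2m} - \zeta + h^{2m/(m+1)} W(y;h) \bigr) =: h^{2m/(m+1)} Q_h,
\]
with $W$ a symbol controlled on the rescaled region occupied by $\supp \phi$.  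Since the rescaling is an $L^2$ isometry up to a constant, the target bound $\| \phi (\tQ \pm i 0)^{-1} \phi \| \leq C h^{-2m/(m+1)}$ reduces to the uniform (in $h$ and in bounded $\zeta$) cutoff resolvent estimate
\[
\| \Phi_h (Q_h \pm i 0)^{-1} \Phi_h \|_{L^2(dy) \to L^2(dy)} \leq C,
\]
with $\Phi_h$ the rescaled cutoff.  The prefactor $h^{-2m/(m+1)}$ is then purely the rescaling factor from inverting $h^{2m/(m+1)}Q_h$.

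For the $h$-independent model $Q_0 = D_y^2 - y^{2m}/m - \zeta$, the classical symbol $q(y, \eta) = \eta^2 - y^{2m}/m$ has $(0,0)$ as its only critical point; on the energy surface $\{q = \zeta\}$, $|\eta| \sim |y|^m/\sqrt m$ for $|y|$ large, so $\dot y = 2\eta$ gives escape to infinity in finite time since $\int^\infty dy/y^m < \infty$ for $m \geq 2$.  Thus $(0,0)$ is the sole trapped orbit, an isolated degenerate saddle.  I would establish the uniform cutoff resolvent bound either by a Mourre-type positive commutator with the generator of dilations $A = \tfrac{1}{2}(yD_y + D_y y)$ --- noting that $\{q, y\eta\} = 2\eta^2 + 2y^{2m}$ is strictly positive off $(0,0)$ and controls both the kinetic and potential parts on the energy surface --- or by explicit turning-point/WKB analysis of the ODE $-u''(y) = (y^{2m}/m + \zeta)u$, which has Airy-like outgoing asymptotics at $\pm\infty$ and a bounded connection problem through the turning points.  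The subleading term $h^{2m/(m+1)} W$ is then absorbed by resolvent perturbation, provided $\phi$ is supported sufficiently close to $(0,0)$; the growing support of $\Phi_h$ is handled by propagation, as all orbits escape the rescaled cutoff region in time bounded uniformly in $h$.

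The principal obstacle is the uniformity in $\zeta$ as $\zeta \to 0$, where the spectral level crosses the critical value of $q$ and the positive-commutator bracket $\{q, y\eta\}$ vanishes at $(0,0)$.  What saves the bound is that the trapped set is a single point (of symplectic measure zero) rather than a positive-volume trapped set, so the cutoff resolvent of $Q_0$ remains $O(1)$ rather than exhibiting genuine trapping loss.  Making this rigorous requires either a second-microlocal refinement of the commutant near $(0,0)$ or matched asymptotic analysis of the WKB solutions at the coalescing turning points, and this is where the principal technical work of the proof lies.
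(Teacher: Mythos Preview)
Your rescaling and your choice of dilation commutant are both the right instincts---indeed the identity $\{q,y\eta\}=2(\eta^2+y^{2m})$ is exactly the positivity the paper exploits---but the reduction you sketch does not close. Writing $z = 1 + h^{2m/(m+1)}\zeta$, the hypothesis $z \in [1-\epsilon, 1+\epsilon]$ forces $|\zeta| \lesssim \epsilon\, h^{-2m/(m+1)}$, not $\zeta$ bounded, and ellipticity does not fill the gap: on $\supp \phi \subset \{|x|, |\xi| < \delta\}$ the symbol $\xi^2 + V(x) - z$ vanishes somewhere for every $z \in [1 - C\delta^{2m}, 1 + \delta^2]$, so once $\epsilon < \delta^2$ there is no elliptic regime inside $[1-\epsilon, 1+\epsilon]$ at all. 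After rescaling, the cutoff $\Phi_h$ has support of size $\sim h^{-1/(m+1)}$ in $y$ and $\sim h^{-m/(m+1)}$ in the dual variable, so the ``model problem'' still carries $h$ through both $\zeta$ and $\Phi_h$. Your Mourre commutator does give $i[Q_0, A] = 2(D_y^2 + y^{2m}) \geq 2\lambda_0 > 0$ globally, but the resulting limiting-absorption bound lives in $\langle A\rangle^{-s}$ weights, and $\langle A\rangle \sim |y\eta|$ is of size up to $h^{-1}$ on $\supp \Phi_h$; converting back to $\Phi_h$ costs a power of $h$ you cannot afford.

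The paper instead runs the positive commutator at the \emph{original} scale with a \emph{bounded} commutant that is a tempered version of your dilation generator. With a second small parameter $\tilde h$ and blown-up coordinates $X = (\tilde h/h)^{1/(m+1)} x$, $\Xi = (\tilde h/h)^{m/(m+1)}\xi$, one takes $a = \Lambda(X)\Lambda(\Xi)\chi(x)\chi(\xi)$ with $\Lambda$ bounded, odd, and $\Lambda(s) \sim s$ near $0$. Then $\hamvf_{q_1}(a)$ is \emph{independent of $z$} and equals $(h/\tilde h)^{(m-1)/(m+1)}g$ with $g$ elliptic for $|(X,\Xi)| \geq 1$ and $g \sim \Xi^2 + X^{2m}$ near the origin. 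The crucial lower bound on $\Op_h^w(g)$ then comes from the spectral gap of the \emph{confining} anharmonic oscillator $D^2 + X^{2m} \geq \lambda_0 > 0$ (Lemma~\ref{L:Q-lower-bound}), not from any resolvent bound for the inverted barrier $D_y^2 - y^{2m}/m$. Because $a^w$ is $L^2$-bounded, pairing $i[\tQ,a^w]$ against $\phi^w u$ gives $\|\tQ\,\phi^w u\| \geq C\tilde h\, h^{2m/(m+1)}\|\phi^w u\|$ uniformly in $z$; fixing $\tilde h$ small and invoking the gluing of \cite{Chr-disp-1} yields the proposition. The ``second-microlocal refinement'' you allude to in your last paragraph is precisely this two-parameter construction, and carrying it out (together with control of the symbolic remainders in the two-parameter calculus) is where the actual work lies.
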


With Proposition \ref{P:ml-res-est} in hand, we observe 
\[
\| \psi \chi k^{2r}(\tau \pm i 0  + P_k)^{-1} \chi \psi \|_{L^2_x \to L^2_x} \leq
C
\]
holds if 
\[
k^{2(r-1)} = k^{-2m/(m+1)},
\]
or 
\[
r = \frac{1}{m+1}.
\]
From \eqref{E:l-sm-r}, this implies Proposition \ref{P:smoothing} (see
also \cite[Theorem 1]{Chr-sch2}).

\subsection{Proof of Proposition \ref{P:ml-res-est}}
\label{SS:ml-res-est-pf-ssect}

The technique of proof is to prove an invertibility estimate
microlocally near $(0,0)$ in Lemma \ref{L:ml-inv} below.  From this, one easily obtains a resolvent
estimate with complex absorbing potential, and then the gluing techniques of
\cite[Proposition 2.2]{Chr-disp-1} imply the Proposition (see also the
recent paper of Datchev-Vasy \cite{DaVa-gluing}).

The proof of the microlocal invertibility estimate proceeds through
several steps.  First, we rescale the principal symbol of $\tQ$ to
introduce a calculus of two parameters.  We then quantize in the
second parameter which eventually will be fixed as a constant in the
problem.  This technique has been used in
\cite{SjZw-mono,SjZw-frac,Chr-NC,Chr-QMNC}.

Our central result to achieve microlocal invertibility is a lower
bound for the resolvent of the semiclassical operator $\tQ,$ whose potential
has a degenerate barrier top.  This result is of independent interest,
and is used to prove the sharp resolvent estimate in Theorem \ref{T:resolvent}.
\begin{lemma}
\label{L:ml-inv}
For $\epsilon>0$ sufficiently small, let $\phi \in \s(T^* \reals)$
have compact support in $\{ |(x,\xi) |\leq \epsilon\}$.  Then there
exists $C_\epsilon>0$ such that 
\begin{equation}
\label{E:ml-inv}
\| \tQ \phi^w u \| \geq C_\epsilon h^{2m/(m+1)} \|
\phi^w u \|, \,\,\, z \in [1-\epsilon, 1 + \epsilon].
\end{equation}
\end{lemma}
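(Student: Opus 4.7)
The plan is to analyze $\tQ$ after a natural semiclassical rescaling near the degenerate fixed point $(0,0)$ of the Hamilton flow of its principal symbol $p=\xi^2+A^{-2}(x)-z$. At $z=1$, the vanishing of both $\xi$ and $A^{-2}(x)-1$ at $(0,0)$, combined with the degeneracy of order $2m$ in $x$, forces the natural scaling $x=h^{1/(m+1)}y$, $\xi=h^{m/(m+1)}\eta$, balancing kinetic and potential contributions at the common order $\tilde h^2=h^{2m/(m+1)}$. Conjugating by $U_\lambda u(y)=\lambda^{1/2}u(\lambda y)$ with $\lambda=h^{1/(m+1)}$ and using $A^{-2}(\lambda y)=1-\lambda^{2m}y^{2m}/m+O(\lambda^{4m}y^{4m})$ yields
\[
U_\lambda\tQ U_\lambda^{-1}=\tilde h^2\bigl[D_y^2-y^{2m}/m\bigr]+(1-z)+R_{h,\lambda},
\]
where $R_{h,\lambda}$ collects the higher-order Taylor remainder and the subprincipal $h^2V_1(\lambda y)$. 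The appearance of both $h$ and $\lambda$ in $R_{h,\lambda}$ places us in the two-parameter pseudodifferential framework used in \cite{SjZw-mono,SjZw-frac,Chr-NC,Chr-QMNC}.

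The heart of the argument is a coercivity bound for the model operator $Q_0=D_y^2-y^{2m}/m$: for $u$ microlocalized to a fixed compact subset of $T^*\RR$, $\|Q_0 u\|\geq c\|u\|$. The key Poisson bracket is
\[
\{p_0,y\eta\}=2\eta^2+2y^{2m},
\]
so the self-adjoint commutant $A=\tfrac12(yD_y+D_y y)$ satisfies $[Q_0,A]=-2i(D_y^2+y^{2m})$, whose right-hand side is an anharmonic oscillator with positive spectrum bounded below by some $\mu_1>0$. The standard positive commutator manipulation yields
\[
\mu_1\|u\|^2\leq\langle(D_y^2+y^{2m})u,u\rangle=\Im\langle Q_0 u,Au\rangle\leq\|Q_0 u\|\|Au\|,
\]
and combining with the $L^2$-boundedness of $A$ on microlocalized $u$ (since $A$ composed with a compactly supported cutoff has compactly supported symbol, hence is bounded by Calder\'on--Vaillancourt) gives the claimed coercivity.

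With the model estimate in hand, decompose $\phi$ in the rescaled $(y,\eta)$ variables. On a fixed compact neighborhood of $(0,0)$, the model bound applies after rescaling and yields the lower bound $\|\tQ\phi^w u\|\gtrsim\tilde h^2\|\phi^w u\|$. On the complementary region where $\eta^2+y^{2m}$ is bounded below, the rescaled principal symbol $\eta^2-y^{2m}/m$ is bounded away from zero in absolute value, so an elliptic estimate in the two-parameter calculus yields a strictly stronger lower bound. The spectral parameter $(1-z)$ is either of size $O(\tilde h^2)$ and absorbed by the model bound, or $|1-z|\gtrsim\tilde h^2$, in which case the Hamiltonian flow at energy $z$ becomes non-trapping near $(0,0)$ and standard positive-commutator/escape-function techniques yield a lower bound of order $h$, strictly stronger than $h^{2m/(m+1)}$.

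The main obstacle is that the cutoff $\phi^w$ is at fixed scale $\epsilon$ in $(x,\xi)$ but corresponds to a cutoff of macroscopic size $\epsilon/\lambda\to\infty$ in $(y,\eta)$, so keeping the remainder $R_{h,\lambda}$ uniformly subordinate to $\tilde h^2 Q_0$ over this growing region demands careful two-parameter bookkeeping and a separate elliptic treatment of the ``far'' region. Achieving uniformity in $z$ over the full interval $[1-\epsilon,1+\epsilon]$---interpolating between the truly degenerate estimate at $z=1$ and the non-trapping regime for $z$ bounded away from $1$---is the other main technical difficulty.
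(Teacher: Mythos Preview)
Your rescaling and the commutator identity $[Q_0,A]=-2i(D_y^2+y^{2m})$ with $A=\tfrac12(yD_y+D_yy)$ are correct and are indeed the core of the argument. But the decomposition into a ``near'' region (handled by the model estimate) and a ``far'' region (handled by ellipticity) does not work: the characteristic set of the rescaled symbol $\eta^2-y^{2m}/m$ is the pair of curves $\eta=\pm y^m/\sqrt m$, which runs out to infinity in $(y,\eta)$, and the same holds for the full symbol at every $z\in[1-\epsilon,1+\epsilon]$. There is no elliptic region, so the entire (growing) microsupport of $\phi^w u$ in the rescaled variables has to be treated by a commutator argument. This is precisely where the obstacle you identified becomes fatal: the symbol $y\eta$ of $A$ is unbounded on $\{|y|,|\eta|\lesssim h^{-1/(m+1)}\}$, so the step $\|Au\|\leq C\|u\|$ fails uniformly in $h$, and the positive-commutator inequality yields nothing.

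The paper's fix is to replace $y\eta$ by a \emph{bounded} commutant $a=\Lambda(\Xi)\Lambda(X)\chi(x)\chi(\xi)$, where $\Lambda(s)=\int_0^s\langle s'\rangle^{-1-\epsilon_0}\,ds'$ is a bounded odd increasing function. One then checks that in rescaled coordinates the Poisson bracket $\hamvf(a)$ is bounded below by $c(\Xi^2+X^{2m})$ near the origin and by a \emph{positive constant} for $\Xi^2+X^2\geq 1$: both summands $\Xi\Lambda(\Xi)\langle X\rangle^{-1-\epsilon_0}$ and $X^{2m-1}\Lambda(X)\langle\Xi\rangle^{-1-\epsilon_0}$ are nonnegative, and at least one is bounded below away from the origin. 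The anharmonic-oscillator lower bound (your $\mu_1$) then controls the near-origin piece, and the second parameter $\tilde h$ is introduced solely to make the symbolic calculus converge in this marginal class; it is fixed small at the end. Since the commutant does not depend on $z$, the estimate is automatically uniform on $[1-\epsilon,1+\epsilon]$, so no separate non-trapping argument for $|1-z|\gg h^{2m/(m+1)}$ is needed.
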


\subsection{The two-parameter calculus}

Following Sj\"ostrand-Zworski \cite[\S3.3]{SjZw-frac}, we introduce a
calculus with two parameters, designed to enable symbolic computations
in the $h^{-1/2}$ calculus which would otherwise involve global
considerations rather than a local Moyal product of symbols.  We use a
somewhat more general version of this calculus than in
\cite{SjZw-frac}, involving inhomogenous powers of $h.$

For $\alpha\in [0,1]$ and $\beta\leq 1-\alpha,$ we let
\begin{eqnarray*}
\lefteqn{\s_{\alpha,\beta}^{k,m, \widetilde{m}} \left(T^*(\RR^n) \right):= } \\
& = & \Bigg\{ a \in \Ci \left(\RR^n \times (\RR^n)^* \times (0,1]^2 \right):  \\
 && \quad \quad  \left| \partial_x^\rho \partial_\xi^\gamma a(x, \xi; h, \tilde{h}) \right| 
\leq C_{\rho \gamma}h^{-m}\tilde{h}^{-\widetilde{m}} \left(
  \frac{\tilde{h}}{h} \right)^{\alpha |\rho| + \beta |\gamma|} 
\langle \xi \rangle^{k - |\gamma|} \Bigg\}.
\end{eqnarray*}
Throughout this work we will always assume $\tilde{h} \geq h$.  
We let $\Psi_{\alpha,\beta}^{k, m, \widetilde{m}}$ denote the
corresponding spaces of semiclassical pseudodifferential operators
obtained by Weyl quantization of these symbols. We will sometimes add a
subscript of $h$ or $\tilde{h}$ to indicate which parameter is used in
the quantization; in the absence of such a parameter, the quantization
is assumed to be in $h.$  The class $\s_{\alpha,\beta}$ (with no
superscripts) will denote $\s_{\alpha,\beta}^{0,0,0}$ for brevity.

In \cite{SjZw-frac}, it is observed that in the special case
$\alpha=\beta=1/2,$ the composition in the
calculus can be computed in terms of a symbol product that converges
in the sense that terms improve in $\tilde{h}$ and $\xi$ orders, but
not in $h$ orders (owing to the marginality of the $h^{-1/2}$
calculus, which is what the introduction of the second parameter
$\tilde{h}$ mitigates).  We will restrict our attention in what
follows to a generalization of this marginal case:
$$
\alpha+\beta=1.
$$

By the same arguments employed in \cite{SjZw-frac}, we may easily
verify that the calculus $\Psi_{\alpha,\beta}$ is closed under composition: if
$ a \in   \s^{k,m , {\widetilde m} }_{\alpha,\beta}$ and
$ b \in   \s^{k',m', \widetilde m''}_{\alpha,\beta} $ then 
\[ \Op_h^w (a) \circ \Op_h^w(b) = \Op_h^w (c)
\ \text{ with } \ 
c \in   \s^{k+k',m +m', {\widetilde m}+ {\widetilde m}'  }_{\alpha,\beta}\,.
\]
The presence of the additional parameter $ \tilde h $ allows us to 
conclude that 
\[ c \equiv \sum_{ |\rho | <  M } \frac{1}{\rho !} 
\partial_\xi^\rho a D_x^\rho b \ \mod \s^{ k + k' - M , m  + m ' , {\widetilde m} + {\widetilde m}' 
- M }_{\alpha,\beta} \,, \]
that is, we have a symbolic expansion in powers of $ \tilde h $. 

We also note that a more general version of \cite[Lemma 3.6]{SjZw-frac} holds, giving 
error estimates on remainders:
\begin{lemma}
\label{l:err}
Suppose that 
$ a, b \in \s_{\alpha,\beta}$, 
and that $ c^w = a^w \circ b^w $. 
Then 
\begin{equation}
\label{eq:weylc}  c ( x, \xi) = \sum_{k=0}^N \frac{1}{k!} \left( 
\frac{i h}{2} \sigma ( D_x , D_\xi; D_y , D_\eta) \right)^k a ( x , \xi) 
b( y , \eta) |_{ x = y , \xi = \eta} + e_N ( x, \xi ) \,,
\end{equation}
where for some $ M $
\begin{equation}
\label{eq:new1}
\begin{split}
& | \partial^{\gamma} e_N | \leq C_N h^{N+1}
 \\
& \ \ 
\times \sum_{ \gamma_1 + \gamma_2 = \gamma } 
 \sup_{ 
{{( x, \xi) \in T^* \RR^n }
\atop{ ( y , \eta) \in T^* \RR^n }}} \sup_{
|\rho | \leq M  \,, \rho \in \NN^{4n} }
\left|
\Gamma_{\alpha, \beta, \rho,\gamma }(D)
( \sigma ( D) ) ^{N+1} a ( x , \xi)  
b ( y, \eta ) 
\right| \,,
\end{split} 
\end{equation}
where $ \sigma ( D) = 
 \sigma ( D_x , D_\xi; D_y, D_\eta )  $ as usual,  
and 
\[
\Gamma_{\alpha, \beta, \rho,\gamma }(D) =( h^\alpha \pa_{(x,y)},
h^\beta \pa_{(\xi,\eta)}))^\rho \partial^{\gamma_1} 
\partial^{\gamma_2}.
\]
\end{lemma}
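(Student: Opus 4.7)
The plan is to follow the standard derivation of the Weyl composition asymptotic expansion, adapted to the two-parameter setting with $\alpha+\beta=1$, and then to track carefully how each derivative is weighted in the $\s_{\alpha,\beta}$ calculus. Starting from the integral representation
\[
(a^w\circ b^w)(x,\xi) = \frac{1}{(\pi h)^{2n}} \iint e^{-\frac{2i}{h}\sigma(z-x,\zeta-\xi;w-x,\eta-\xi)} a(z,\zeta)\,b(w,\eta)\,dz\,d\zeta\,dw\,d\eta,
\]
a change of variables and stationary phase (formally, inversion of the quadratic phase by Fourier) give the well-known operator formula
\[
c(x,\xi) = \exp\!\left(\tfrac{ih}{2}\sigma(D_x,D_\xi;D_y,D_\eta)\right) a(x,\xi)\,b(y,\eta)\Big|_{x=y,\xi=\eta},
\]
valid a priori for symbols with enough decay, and extended by density/continuity arguments to the calculus.

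Next, I would Taylor expand the exponential to order $N$ with the standard integral remainder
\[
e^{\frac{ih}{2}\sigma(D)} = \sum_{k=0}^{N} \frac{1}{k!}\left(\tfrac{ih}{2}\sigma(D)\right)^k + \int_0^1 \frac{(1-t)^N}{N!}\left(\tfrac{ih}{2}\sigma(D)\right)^{N+1} e^{\frac{ith}{2}\sigma(D)}\,dt,
\]
thus giving the leading terms of \eqref{eq:weylc} and an explicit expression for $e_N$. To estimate $\partial^\gamma e_N$, I would use the oscillatory integral representation of $e^{\frac{ith}{2}\sigma(D)}$ acting on tensor products, then integrate by parts in the phase variables a bounded number $M$ of times. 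Each integration by parts produces either an $(x,y)$ derivative (weighted by $h^\alpha$) or a $(\xi,\eta)$ derivative (weighted by $h^\beta$) on the amplitude, which is why the estimator $\Gamma_{\alpha,\beta,\rho,\gamma}(D)$ appears in the statement, and the resulting prefactor is exactly $h^{N+1}$ coming from the $N+1$ copies of $\tfrac{h}{2}\sigma(D)$.

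The main obstacle, as in \cite{SjZw-frac}, is accounting for how the $\tilde h$-power in the remainder arises naturally once one uses the symbol bounds. Since each factor of $\sigma(D)=D_x\!\cdot\! D_\eta - D_\xi\!\cdot\! D_y$ contains one position and one momentum differentiation distributed across $a,b$, invoking the $\s_{\alpha,\beta}$ bounds yields a weight of $(\tilde h/h)^{\alpha+\beta} = \tilde h/h$ per application, so $N+1$ applications combine with the $h^{N+1}$ prefactor to give $\tilde h^{N+1}$, matching the asserted expansion in powers of $\tilde h$. The delicate point is verifying that the $M$ integration-by-parts operations needed to make the oscillatory integral absolutely convergent can be chosen uniformly in $N$ and $\gamma$, so that the sup over a bounded set of multi-indices $|\rho|\leq M$ appears in \eqref{eq:new1}; this is where the assumption $\alpha+\beta=1$ is used essentially, since it is exactly the borderline case in which the symbolic expansion remains asymptotic without losing powers of $h$. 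The rest of the argument is bookkeeping of seminorms, closely parallel to the $\alpha=\beta=1/2$ case treated in \cite[Lemma 3.6]{SjZw-frac}.
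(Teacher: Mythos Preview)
Your proposal is correct and follows essentially the same route as the paper: the exponential composition formula, the Taylor integral remainder for $e^{ih\sigma(D)/2}$, and then a derivative bound on $e^{ith\sigma(D)/2}$ applied to $(ih\sigma(D)/2)^{N+1}(a\otimes b)$. The one place where the paper is slightly cleaner is in extracting the weights: rather than arguing that integration by parts ``produces'' $h^\alpha$- and $h^\beta$-weighted derivatives, the paper observes directly that $h\sigma(D)=\sigma(h^\alpha D_x,h^\beta D_\xi;h^\alpha D_y,h^\beta D_\eta)$ when $\alpha+\beta=1$, and then rescales $(x,y)\mapsto h^{-\alpha}(x,y)$, $(\xi,\eta)\mapsto h^{-\beta}(\xi,\eta)$ to reduce to the standard (unweighted) bound $|\partial^\gamma e^{iA(D)/2}f|\leq C\sum_{|\rho|\leq M}\sup|\partial^{\gamma+\rho}f|$ for a nondegenerate quadratic form $A$.
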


\begin{proof}
Following \cite[Lemma 3.6]{SjZw-frac} we recall that
$$
c(x,\xi) =  \exp (i h \sigma(D)/2) a(x,\xi) b(y,\eta)|_{x=y,\eta=\xi}
$$
and hence by Taylor's theorem the remainder may be expressed as
\begin{equation}\label{eN}
e_N(x,\xi) = \frac{1}{N!} \int_0^1 (1-t)^N  \exp(it h \sigma(D)/2) (ih \sigma(D)/2)^{N+1}\big(a(x,\xi)b(y,\eta)\big)\, dt\big\rvert_{x=y,\eta=\xi} .
\end{equation}
Likewise $\pa^\gamma e_N$ is a sum of terms of the form
\begin{equation}\label{eNderiv}
\text{const} \times \int_0^1 (1-t)^N \pa_{(x,\xi)}^{\gamma_1} \pa_{(y,\eta)}^{\gamma_2} \exp(it h \sigma(D)/2) (ih \sigma(D)/2)^{N+1}\big(a(x,\xi)b(y,\eta)\big)\, dt\big\rvert_{x=y,\eta=\xi} 
\end{equation}
where $\gamma_1+\gamma_2=\gamma.$
We further recall that for any non-degenerate real quadratic form $A$ there exists $M$ such that for all $f$,
$$
\abs{\pa^\gamma \exp(iA(D)/2) f} \leq C \sum_{\abs{\rho}<M} \sup
\abs{\pa^{\gamma+\rho} f}
$$
(where the $\sup$ is over all phase and base variables---in our case,
$(x,\xi,y,\eta)$).  Now we take $A(D)=th\sigma(D)$ and note that
$$
h\sigma(D) =\sigma(h^\alpha D_x, h^\beta D_\xi; h^\alpha D_y, h^\beta D_\eta),
$$
Rescaling the $x,y$ variables by $h^\alpha$ and $\xi,\eta$ by
$h^\beta$ shows that we may estimate
$$
\abs{\pa^{\gamma_1}\pa^{\gamma_2} \exp(ith \sigma(D)/2) f} \leq C \sum_{\abs{\rho}<M} \sup
\abs{\pa^{\gamma_1}\pa^{\gamma_2} (h^\alpha\pa_{(x,y)},h^\beta\pa_{(\xi,\eta)})^\rho f}
$$
Thus we may estimate the integrand of of \eqref{eNderiv} uniformly by
a constant times
$$
 h^{N+1}\sup \sum_{\abs{\rho}<M} 
\abs{\pa^{\gamma_1} \pa^{\gamma_2} (h^\alpha\pa_{x,y},h^\beta\pa_{\xi,\eta})^\rho 
\sigma(D)^{N+1}a(x,\xi) b(y,\eta)},
$$
and the result follows.
\end{proof}

As a particular consequence we notice that if $ a \in 
\s_{\alpha,\beta} ( T^* \RR^n)  $ and $ b \in  \s ( T^* \RR^n ) $ then 
\begin{align}
\label{eq:abc}
  c ( x, \xi) = &
 \sum_{k=0}^N \frac{1}{k!} \left( 
i h \sigma ( D_x , D_\xi; D_y , D_\eta) \right)^k a ( x , \xi) 
b( y , \eta) |_{ x = y , \xi = \eta} \\
& + {\mathcal O}_{\s_{\alpha,\beta}}
( h^{N+1} \max \{ (\tilde{h}/h)^{(N+1) \alpha}, (\tilde{h}/h)^{(N+1) \beta} \} 
\,. \notag
\end{align}

We will let $\B$ denote the ``blowdown map''\footnote{We
  remark that introducing the coordinates $(X,\Xi)$ is tantamount to
  performing an anisotropic blowup centered at $x=\xi=h=0$.}
\begin{equation}\label{blowdown}
(x,\xi)=\B(X,\Xi)=((h/\th)^\alpha X, (h/\th)^\beta \Xi).
\end{equation}
The spaces of operators $\Psi_h$ and 
$\Psi_{\tilde{h}}$ are related via a unitary rescaling in the
following fashion.  
Let $a \in \s_{\alpha,\beta}^{k,m,\tilde{m}}$, and consider the
rescaled symbol
\be
a\left(\csh^{\alpha}X, \csh^{\beta} \Xi
\right)= a \circ \B \in \s_{0,0}^{k,m,\tilde{m}}.
\ee
Define the unitary operator $T_{h, \tilde{h}} u(X) = \csh^{\frac{n\alpha}{2}}u\left(
  \csh^{\alpha} X \right)$,   
so that
\be\label{rescaledquantization}
\Op_{\tilde{h}}^w(a\circ B) T_{h, \tilde{h}} u= T_{h, \tilde{h}} \Op_h^w(a) u.
\ee

\subsection{Proof of Lemma \ref{L:ml-inv}}

By virtue of the cutoff $\phi^w$, to begin we are
working microlocally in $\{|(x,\xi)| \leq \epsilon \}$.  We observe
that since $2m/(m+1)<2$, if we can show the estimate \eqref{E:ml-inv}
for $Q_1 = \tQ - h^2 V_1$, the estimate follows also for $\tQ$.  
Let
\[
q_1 = \xi^2 + A^{-2} -z 
\]
be the principal symbol of $Q_1$.  The function $A^{-2} = (1 +
x^{2m})^{-1/m}$ is analytic near $x = 0$, and since $|x| \leq \epsilon$ is
small, we expand $A^{-2}$ in a Taylor series about $x = 0$ and write
\[
q_1 = \xi^2 - \frac{1}{m} x^{2m}(1 + a(x)) -z_1,
\]
where $z_1 = z - 1 \in [-\epsilon, \epsilon],$ and $a(x) = \O(x^{2m}).$

The Hamilton vector field $\hamvf$ associated to the symbol $q_1$ is given by
$$
\hamvf = 2\xi\pa_{x} +(2 x^{2m-1}+ \O(x^{4m-1})) \pa_{\xi}.
$$
We will consider a commutant localizing in this region and singular at the
origin in a controlled way: as above we introduce new variables
$$
\Xi=\frac{\xi}{(h/\th)^{m\alpha}},\quad X = \frac{x}{(h/\th)^\alpha},
$$
with $$\alpha = \frac 1{m+1}.$$ (When we wish to be more precise
below, we will explicitly use the map $(x,\xi) =\B(X,\Xi)$ in this
coordinate change; for the moment, we simply abuse notation.)  As
$m\alpha +\alpha = 1,$ we note that quantizations of symbolic
functions of $X,\Xi$ lie in the pseudodifferential calculus, hence the
symbol of the composition of two such operators depends
\emph{globally} on the symbols of the two operators.  It is in order
to cope with this issue that we employ the two parameter calculus.

We remark that in the new ``blown-up'' coordinates $\Xi,X,$
\begin{equation}\label{blownupvf}
\hamvf= (h/\th)^{\frac{m-1}{m+1}}\big(\Xi \pa_X+
X^{2m-1}\pa_\Xi+\O((h/\th)^{2m\alpha} X^{2m})\pa_\Xi\big)
\end{equation}

Now fix a small $\ep_0>0$ and set 
$$
\Lambda(s) = \int_0^s \ang{s'}^{-1-\ep_0} \, ds';
$$
$\Lambda$ is of course a symbol of order $0,$ with $\Lambda(s)
\sim s$ near $s=0.$

We introduce the singular symbol
$$
a(x,\xi;h) = \Lambda(\Xi)\Lambda(X)\chi(x)\chi(\xi)= \Lambda(\xi/(h/\th)^{m\alpha}) \Lambda(x/(h/\th)^\alpha)\chi(x)\chi(\xi),
$$
where $\chi(s)$ is a cutoff function equal to $1$ for $\abs{s}<\delta_1$
and $0$ for $s>2\delta_1$ ($\delta_1$ will be chosen shortly).
Then $a$ is bounded, and a $0$ symbol in $X,\Xi:$
$$
\abs{\pa_X^\alpha \pa_\Xi^\beta a}\leq C_{\alpha,\beta}.
$$
(Recall that $x=(h/\th)^\alpha X$ and $\xi=(h/\th)^{m\alpha}\Xi.$)
Using \eqref{blownupvf}, it is simple to
compute
\begin{equation}\label{gdefn}
\begin{aligned}
\hamvf (a) = & (h/\th)^{\frac{m-1}{m+1}}\chi(x)\chi(\xi)\big(
\Lambda(\Xi)
\ang{X}^{-1-\ep_0}\Xi \\
& +X^{2m-1}\ang{\Xi}^{-1-\ep_0}
\Lambda(X) (1+\O(x^{2m}))\big)+r\\
= & 
(h/\th)^{\frac{m-1}{m+1}}\chi(x)\chi(\xi)\bigg(
 (h/\th)^{-m\alpha}\xi \Lambda(\xi/(h/\th)^{m\alpha})
\ang{x/(h/\th)^\alpha}^{-1-\ep_0}\\ &  +(h/\th)^{-(2m+1)\alpha} x^{2m-1}
\Lambda(x/(h/\th)^\alpha) \ang{\xi/(h/\th)^{m\alpha}}^{-1-\ep_0}
(1+\O(x^{2m}))\bigg)+r\\
\equiv & (h/\th)^{\frac{m-1}{m+1}} g+r
\end{aligned}
\end{equation}
with $$\supp r\subset \{\abs{x}>\delta_1\} \cup \{\abs{\xi}>\delta_1\}$$
($r$ comes from terms involving derivatives of $\chi(x)\chi(\xi)$).
Note that near $X=\Xi=0,$ since $\Lambda(s)\sim s$ for $s\sim 0,$ the term
\begin{equation}\label{g}
g=\Lambda(\Xi)
\ang{X}^{-1-\ep_0}\Xi +\ang{\Xi}^{-1-\ep_0}
\Lambda(X) X^{2m-1} (1+\O(x^{2m}))
\end{equation}
in $\hamvf(a)$ is bounded below by a multiple of
$\Xi^2+X^{2m}.$  Provided $\delta_1$ is chosen small enough (so we can
absorb the $\O(x^{2m})$ error term),  $g$ is in fact strictly positive
away from $X=\Xi=0,$ while in the region $\abs{(X,\Xi)}\geq 1,$ we find that
since $\sgn \Lambda(s)=\sgn (s),$ when $\abs{\Xi}\geq
\max(\abs{X}^{1+\ep_0}, 1)$
then
$$
g\geq \Lambda(\Xi)
\ang{X}^{-1-\ep_0}\Xi\gtrsim \frac{\abs{\Xi}}{\ang{\Xi}}\geq C>0,
$$
while for $\abs{X}^{1+\ep_0}\geq \max(\abs{\Xi},1),$ we have (providing
$\delta_1\ll 1$)
$$
g \geq (1/2) \ang{\Xi}^{-1-\ep_0} \Lambda(X) X^{2m-1} \gtrsim
\abs{X}^{-2(1+\ep_0)}\abs{X}^{2m-1} \geq C>0,
$$
provided $2(1+\ep_0)<2m-1.$  Thus, since the larger of $\abs{\Xi}$ and
$\abs{X}^{1+\ep_0}$ is assuredly greater than $1$ in the region of
interest, we have in fact shown that
$$
g \geq C>0\quad \text{ in } \{\Xi^2+X^2>1\}.
$$
Thus, we find
$$
\hamvf(a)=(h/\th)^{\frac{m-1}{m+1}}g+r
$$
with 
$$r = \O_{\s_{\alpha, \beta}}((h/\th)^{(m-1)/(m+1)}( (h/\th)^\alpha |
\Xi| + (h/\th)^\beta | X^{2m-1}  |)$$
 supported as above and
$$
g(X,\Xi;h) =  \begin{cases} c (\Xi^2 + X^{2m}) (1 + r_2), & \Xi^2 +X^2\leq 1\\
b, &  \Xi^2 +X^2\geq 1,
\end{cases}
$$
where $c >0$ is a constant,$r_2 = \O_{\s_{\alpha, \beta}}( \delta_1)$, and $b>0$ is elliptic.

We will require a positivity result dealing with operators satisfying
estimates of this type.
\begin{lemma}\label{lemma:positivity0}
Let a real-valued symbol $\tg(x,\xi;h)$ satisfy
$$
\tg(x,\xi;h) =  \begin{cases} c (\xi^2 + x^{2m}) (1 + r_2), & \xi^2 +x^2\leq 1\\
b, &  \xi^2 +x^2\geq 1,
\end{cases}
$$
where $c >0$ is constant, $r_2 = \O_{\s_{\alpha, \beta}}( \delta_1),$ and $b>0$ is elliptic.
Then there exists $c_0>0$ such that
$$
\lll\Op_h^w(\tg)u, u \rrr \geq c_0h^{2m/(m+1)} \| u \|^2
$$
for $h$ sufficiently small.
\end{lemma}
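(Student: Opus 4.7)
The central observation is that on the inner region $\{x^2+\xi^2\leq 1\},$ the symbol $\tg$ is a small perturbation of $c(\xi^2+x^{2m}),$ which is precisely the semiclassical symbol of the degenerate anharmonic oscillator $P_0 := -h^2\pa_x^2 + cx^{2m}.$ The unitary rescaling $x=h^{1/(m+1)}y$ conjugates $P_0$ to $ch^{2m/(m+1)}(-\pa_y^2+y^{2m}),$ and since $-\pa_y^2+y^{2m}$ has strictly positive least eigenvalue $\lambda_0$ on $L^2(\reals),$ we obtain the key operator bound $P_0 \geq c\lambda_0 h^{2m/(m+1)}I.$ In the outer region $\tg \geq b_0>0$ is elliptic, contributing an $O(1)$ lower bound; together these two estimates saturate the required bound of order $h^{2m/(m+1)}.$

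\textbf{Strategy.} I would work in the two-parameter calculus with $\alpha=1/(m+1),$ $\beta=m/(m+1),$ and second scale $\tilde h = 1.$ The unitary identity \eqref{rescaledquantization} reduces the lemma to proving
\[
\lll\Op_1^w(\tg\circ\B)v,v\rrr \geq c_0 h^{2m/(m+1)}\|v\|^2.
\]
Under the blowdown $\B,$ on any fixed $(X,\Xi)$-ball (for $h$ sufficiently small) the inner region covers the ball, and there $\tg\circ\B = ch^{2m/(m+1)}(\Xi^2+X^{2m})(1+r_2\circ\B)$ with $r_2\circ\B = O(\delta_1),$ while at $(X,\Xi)$ with $|X|\gtrsim h^{-\alpha}$ or $|\Xi|\gtrsim h^{-m\alpha}$ the symbol transitions to $b\circ\B \geq b_0.$ I would introduce a partition of unity $1 = \phi_1^2+\phi_2^2$ on $(X,\Xi)$ with $\phi_1$ supported in a fixed large ball: the $\phi_1$-contribution inherits the ground-state lower bound $\geq c\lambda_0 h^{2m/(m+1)}$ from the model anharmonic oscillator $-\pa_X^2+X^{2m}$ (whose Weyl quantization at $\tilde h=1$ is exactly this operator), and the $\phi_2$-contribution inherits an $O(1)$ lower bound from ellipticity of the outer symbol. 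Choosing $\delta_1$ small absorbs the $r_2$ error into the leading term.

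\textbf{Main obstacle.} The principal technical step is extracting the ground-state lower bound in the presence of the cutoff $\phi_1$: the identity $\Op_1^w(\phi_1^2 c(\Xi^2+X^{2m})) = -c\pa_X^2 + cX^{2m} - \Op_1^w((1-\phi_1^2)c(\Xi^2+X^{2m}))$ subtracts a nonnegative operator, so a direct operator inequality does not preserve the lower bound. The resolution exploits that the ground state $\phi_0$ of $-\pa_X^2+X^{2m}$ is concentrated at $|(X,\Xi)|=O(1)$ with super-Gaussian decay (of rate $|X|^{m+1}$), so for $\phi_1$ supported in a sufficiently large ball, $\phi_0$ is essentially a near-eigenfunction of the cutoff operator; a min-max/projection argument on the low-lying spectrum transfers the lower bound up to exponentially small corrections. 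The remainder terms from Lemma \ref{l:err} at $\tilde h=1$ are of size $O(h^{(N+1)m/(m+1)}),$ hence $o(h^{2m/(m+1)})$ once $N\geq 2,$ and the $O(h^2)$ Fefferman--Phong error is subleading since $2>2m/(m+1)$ for all finite $m.$
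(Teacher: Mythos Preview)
Your reduction to the anharmonic-oscillator ground state bound is the right idea and matches the paper, but your route differs and the obstacle you flag is not resolved. The paper stays entirely in the $h$-Weyl calculus: for any $u$ with $\langle\Op_h^w(\tg)u,u\rangle\leq\sigma\|u\|^2$ ($\sigma>0$ small, fixed), ellipticity of $b$ on $\{x^2+\xi^2\geq 1\}$ forces $\WF(u)\subset\{x^2+\xi^2\leq 1/2\}$; on that set one factors $\tg=(\xi^2+x^{2m})K^2$ with $K>0$ a strictly positive symbol, and the Weyl calculus gives
\[
\Op_h^w(\tg)=\Op_h^w(K)^*\big(h^2D_x^2+x^{2m}\big)\Op_h^w(K)+\O(h^2).
\]
Since $\Op_h^w(K)\geq\ep_1>0$, Lemma~\ref{L:Q-lower-bound} yields the bound directly---no rescaling to $\tilde h=1$, no partition of unity, no min--max.

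Your proposed min--max resolution does not work as stated: the cutoff operator $\Op_1^w\big(\phi_1^2(\Xi^2+X^{2m})\big)$ has infimum of spectrum equal to $0$, not $\lambda_0$, since its symbol vanishes identically off $\supp\phi_1$ (test against functions living at large $|X|$). That the ground state $\phi_0$ is a near-eigenfunction with near-eigenvalue $\lambda_0$ gives only an \emph{upper} bound on the bottom of the spectrum. The positivity must therefore come from the sum of both pieces, which requires a G{\aa}rding or Fefferman--Phong step on the $\phi_2$ piece; but at $\tilde h=1$ the two-parameter calculus degenerates to ordinary Weyl calculus, and those errors are $O(1)$ (controlled by the uniform $\s_{0,0}$-seminorms of $\tg\circ\B$, which are $O(1)$ owing to the outer region), not $O(h^2)$ or $O(h^{(N+1)m/(m+1)})$ as you claim. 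The paper's factorization $\tg=(\xi^2+x^{2m})K^2$ is precisely what circumvents this: it converts symbolic positivity into operator positivity by conjugation with an elliptic factor, with a genuine $O(h^2)$ remainder from the $h$-Weyl composition, which is indeed $o(h^{2m/(m+1)})$.
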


\begin{proof}
Since $b>0$ is elliptic, there exists $\sigma >0$ sufficiently small
and independent of $h>0$ 
so that if $\lll \Op_h^w(\tg) u, u \rrr \leq \sigma \| u \|^2$, then $u$
has semiclassical wavefront set contained in the set $\{ | x |^2 + |
\xi |^2 \leq 1/2 \}$.  
On this set, we may write
$$
\tg=(\xi^2+x^{2m})K^2
$$
with $K$ a strictly positive symbol.  The Weyl quantization has the
convenient feature that we thus have
$$
\Op_h^w(\tg) = \Op_h^w(K)^* (h^2D_x^2+ x^{2m}) \Op_h^w(K)+\O(h^2),
$$
and $\Op_h^w(K)\geq \ep_1>0.$
Then for $u$ microsupported in $\{ | x |^2 + |
\xi |^2 \leq 1/2 \}$ we thus compute
\[
\lll \Op_h^w(\tg) u, u \rrr \geq \ep_1\lll \Op_h^w(\xi^2 + x^{2m} ) u, u \rrr -
\O(h^2) \| u \|^2.
\]
The lower bound follows, for $h>0$ sufficiently small, from
Lemma \ref{L:Q-lower-bound}.

\end{proof}

We now employ this result to estimate $\Op_h^w (\hamvf(a)).$
\begin{lemma}\label{lemma:positivity1}
For $\tilde{h}>0$ sufficiently small, there exists $c>0$ such that
$\Op_h^w(g)>c\th^{2m/(m+1)},$ uniformly as $h \downarrow 0,$ where $g$
is given by \eqref{g}.
\end{lemma}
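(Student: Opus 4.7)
The plan is to reduce this to the preceding Lemma \ref{lemma:positivity0} via the unitary rescaling \eqref{rescaledquantization}, exploiting the fact that $g$ has been explicitly arranged in the $(X,\Xi)$ coordinates to have the normal form that Lemma \ref{lemma:positivity0} treats.

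First I would apply the rescaling identity \eqref{rescaledquantization}. Since $T_{h,\tilde{h}}$ is unitary and satisfies
$$
\Op_{\tilde{h}}^w(g\circ\B)\,T_{h,\tilde{h}}=T_{h,\tilde{h}}\,\Op_h^w(g),
$$
a lower bound for $\Op_h^w(g)$ as an operator on $L^2$ is equivalent to a lower bound for $\Op_{\tilde{h}}^w(g\circ\B)$. This is the crucial point: after the rescaling, we have transformed the problem from the two-parameter calculus $\Psi_{\alpha,\beta}$ (in $h$) to the ordinary semiclassical calculus $\Psi_{\tilde h}$, with $\tilde{h}$ playing the role of the semiclassical parameter.

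Next I would observe that, by the explicit computation performed just before the statement of Lemma \ref{lemma:positivity0}, the symbol $g\circ\B$, written as a function of the coordinates $(X,\Xi)$, satisfies
$$
g\circ\B(X,\Xi)=\begin{cases} c(\Xi^2+X^{2m})(1+r_2), & \Xi^2+X^2\leq 1,\\ b, & \Xi^2+X^2\geq 1,\end{cases}
$$
with $c>0$ constant, $r_2=\O_{\s_{\alpha,\beta}}(\delta_1)$ (taken as small as desired by choosing $\delta_1$ small), and $b>0$ elliptic. This is precisely the hypothesis of Lemma \ref{lemma:positivity0}, now interpreted with $(X,\Xi)$ as the phase-space variables and $\tilde{h}$ as the semiclassical parameter. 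Applying that lemma yields
$$
\lll\Op_{\tilde{h}}^w(g\circ\B)v,v\rrr\geq c_0\,\tilde{h}^{2m/(m+1)}\|v\|^2
$$
for all $v\in L^2$ and all $\tilde{h}$ sufficiently small.

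Finally, setting $v=T_{h,\tilde{h}}u$ and using unitarity of $T_{h,\tilde{h}}$ transports this bound back:
$$
\lll\Op_h^w(g)u,u\rrr=\lll\Op_{\tilde{h}}^w(g\circ\B)T_{h,\tilde{h}}u,T_{h,\tilde{h}}u\rrr\geq c_0\,\tilde{h}^{2m/(m+1)}\|u\|^2,
$$
which is the claim. The main potential obstacle is verifying that the rescaled symbol $g\circ\B$ genuinely lies in the ordinary $\tilde{h}$-semiclassical symbol class on $T^*\RR$ uniformly in $h$, and that the small parameter $r_2$ remains small uniformly after the blowdown; however, both of these points follow from the construction of the two-parameter calculus (in particular equation \eqref{rescaledquantization} and the choice $\alpha+\beta=1$) together with the observation that on $\supp\chi$, the quantity $x^{2m}=(h/\tilde{h})^{2m\alpha}X^{2m}=\O(\delta_1^{2m})$ regardless of $(h,\tilde{h})$.
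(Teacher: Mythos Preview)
Your proposal is correct and is essentially the same argument as the paper's: both reduce to Lemma~\ref{lemma:positivity0} via the unitary rescaling \eqref{rescaledquantization}, applying that lemma with $\tilde{h}$ in the role of the semiclassical parameter and then transporting the lower bound back by unitarity of $T_{h,\tilde{h}}$. The only difference is a harmless notational one: the paper regards $g$ (as defined in \eqref{g}) as already being a function of $(X,\Xi)$ and therefore writes the $h$-quantization as $\Op_h^w(g\circ\B^{-1})$, whereas you treat $g$ as a function of $(x,\xi)$ and write $g\circ\B$ for its pullback to $(X,\Xi)$; either way the rescaling identity and the invocation of Lemma~\ref{lemma:positivity0} are identical.
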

\begin{proof}
Note that we have written $g$ as a function of $X,\Xi,$ so in changing
variables to $x,\xi$ we are tacitly employing the blowdown map $\B.$
In particular, we are interested in estimating $\Op_h^w(g\circ
\B^{-1})$ from below.
By \eqref{rescaledquantization},
$$
\Op_{\tilde{h}}^w(g) T_{h, \tilde{h}} u=T_{h, \tilde{h}} \Op_h^w(g
\circ \B^{-1}) u,
$$
hence
$$
\lll \Op_h^w(g \circ \B^{-1}) u, u \rrr = \lll T_{h, \tilde{h}}\Op_{\tilde{h}}^w(g)
T_{h, \tilde{h}} u, u \rrr \geq c \tilde{h}^{2m/(m+1)} \| u \|^2
$$
for $\tilde{h}$ sufficiently small, by unitarity of $T_{h,\tilde{h}}$
and Lemma~\ref{lemma:positivity0}, with $\tilde{h}$ replacing $h.$
This establishes the Lemma.
\end{proof}

Before completing the proof of Lemma \ref{L:ml-inv}, we need the
following lemma about the lower order terms in the expansion of the
commutator of $Q_1$ and $a^w$.  

\begin{lemma}
\label{L:Q-comm-error}
The symbol expansion of $[Q_1, a^w]$ in the $h$-Weyl calculus is of
the form
\begin{align*}
[Q_1, a^w] = & \Op_h^w \Bigg( \Big( 
\frac{i h}{2} \sigma ( D_x , D_\xi; D_y , D_\eta) \Big) (q_1(x, \xi)
a(y, \eta) - q_1(y, \eta) a ( x , \xi) ) |_{ x = y , \xi = \eta} \\
& + e (
x, \xi ) + r_3(x, \xi)\Bigg) ,
\end{align*}
where $e$ satisfies
\[
\Op_h^w(e) \leq C \tilde{h}^{-(m-3)/(m+1)} h^{2m/(m+1)} \Op_h^w(g),
\]
with $g$ given by \eqref{g} and $r_3$ supported in $\{ | (x, \xi) | \geq \delta_1 \}$.

\end{lemma}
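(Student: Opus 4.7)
\bigskip
\noindent\textbf{Proof plan for Lemma \ref{L:Q-comm-error}.}
My plan is to apply the two-parameter Moyal composition formula of Lemma \ref{l:err} to both $Q_1 \# a$ and $a \# Q_1$ with $N=2$, and subtract. The $O(h^0)$ terms $q_1 a$ and $a q_1$ cancel; the $O(h^1)$ terms combine to give precisely the Poisson-bracket-level expression $(ih/2)\sigma(D)(q_1(x,\xi)a(y,\eta) - q_1(y,\eta)a(x,\xi))|_{x=y,\xi=\eta}$ written in the statement; and the $O(h^2)$ terms cancel because $\sigma(D)^2$ is invariant under the exchange $(x,\xi)\leftrightarrow(y,\eta)$, so its restriction to the diagonal is the same in both orderings. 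What remains is an $O(h^3)$ symbolic remainder, which I will identify with $e+r_3$.

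To analyze the remainder I exploit the special polynomial form $q_1 = \xi^2 - (1/m)x^{2m}(1+a(x)) - z_1$: since $\partial_\xi^k q_1$ vanishes for $k\geq 3$ and $\partial_\xi^2 q_1 = 2$ is constant, the only surviving contribution to $\sigma(D)^3(q_1 a)|_{x=y,\xi=\eta}$ is (up to sign) the single term $\partial_x^3 q_1\cdot\partial_\xi^3 a\sim x^{2m-3}\partial_\xi^3 a$. Terms arising from derivatives hitting the cutoffs $\chi(x)\chi(\xi)$ have support in $\{|x|>\delta_1\}\cup\{|\xi|>\delta_1\}$ and go into $r_3$. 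Using the $\s_{\alpha,\beta}$ bounds with $\alpha = 1/(m+1)$, $\beta = m/(m+1)$, together with $\Lambda\in S^0$, I obtain the pointwise estimate $|\partial_\xi^3 a|\lesssim (h/\tilde{h})^{-3\beta}\ang{\Xi}^{-3}$, which after inserting $x = (h/\tilde{h})^\alpha X$ yields that the dominant piece of $e$ has size $h^{2m/(m+1)}\tilde{h}^{(m+3)/(m+1)}|X|^{2m-3}\ang{\Xi}^{-3}$ in blown-up coordinates. The still-higher-order Moyal remainders from Lemma \ref{l:err} contribute strictly smaller powers of $\tilde{h}/h$ and are absorbed.

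The main obstacle is converting this symbolic control into the claimed operator inequality $\Op_h^w(e) \leq C\tilde{h}^{-(m-3)/(m+1)}h^{2m/(m+1)}\Op_h^w(g)$. A naive pointwise bound fails: the ratio $|e|/g$ behaves like $|X|^{2m-3}/(\Xi^2+X^{2m})$ near the origin of the blown-up coordinates, which is singular at $X=0$ when $\Xi=0$. The comparison must therefore be made at the operator level. My plan is to factor the surviving contribution essentially as a composition $\Op_h^w(x^{m-3/2})\cdot\Op_h^w(\partial_\xi^3 a)\cdot\Op_h^w(x^{m-3/2})$ modulo $\s_{\alpha,\beta}$-controlled commutators, and then to dominate the outer factors pointwise by $x^{2m}+O(1)$, so that the outer $x^{2m}$ part matches the $X^{2m}$ piece of $g$ near the origin (with the right power of $h/\tilde{h}$), while the $O(1)$ part is controlled by ellipticity of $g$ on $\{\Xi^2+X^2\geq 1\}$ as established in \eqref{g}. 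Careful bookkeeping of the exponents $\alpha$ and $\beta$ through the blowdown map $\B$ and through the Moyal commutator errors, together with an application of the positivity from Lemma \ref{lemma:positivity1} to absorb sharp Gårding error terms, is what pins down the precise power $\tilde{h}^{-(m-3)/(m+1)}h^{2m/(m+1)}$ in the final bound.
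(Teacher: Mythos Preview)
Your overall architecture matches the paper's proof closely: you correctly identify that the $h^2$ term vanishes (only odd terms survive in the Weyl commutator), that the $N=2$ remainder reduces to the single contribution $\partial_x^3 q_1\cdot\partial_\xi^3 a$ because the mixed and higher $\xi$-derivatives of $q_1$ vanish, that cutoff derivatives land in $r_3$, and that in blown-up coordinates the surviving symbol has size $h^{2m/(m+1)}\tilde{h}^{(m+3)/(m+1)}\abs{X}^{2m-3}\ang{\Xi}^{-3}$. You also correctly flag that a pointwise comparison $\abs{e}\lesssim g$ fails near $X=\Xi=0$.

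The difficulty is in your proposed factorization $\Op_h^w(x^{m-3/2})\cdot\Op_h^w(\partial_\xi^3 a)\cdot\Op_h^w(x^{m-3/2})$. The symbol $x^{m-3/2}$ is not smooth at $x=0$ for any integer $m$, so these are not pseudodifferential operators in any calculus available here; and even formally, the middle factor $\partial_\xi^3 a\sim\Lambda'''(\Xi)$ changes sign, so a sandwich of this type does not obviously yield a one-sided operator bound. The subsequent step of ``dominating the outer factors pointwise by $x^{2m}+O(1)$'' does not translate directly into an operator inequality for the composition.

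The paper avoids this by a more direct route. After passing to the $\tilde{h}$-quantization in $(X,\Xi)$ via $T_{h,\tilde{h}}$, one simply splits according to microsupport. For $u$ microsupported in $\{X^2+\Xi^2\leq 1\}$, the symbol $k=X^{2m-3}\Lambda'''(\Xi)\chi\chi$ is \emph{bounded}, so $\ang{\Op_{\tilde{h}}^w(k)u,u}\leq C\norm{u}^2$; then the spectral lower bound $\Op_{\tilde{h}}^w(g)\geq c_0\tilde{h}^{2m/(m+1)}$ from Lemma~\ref{lemma:positivity0} gives $C\norm{u}^2\leq C\tilde{h}^{-2m/(m+1)}\ang{\Op_{\tilde{h}}^w(g)u,u}$. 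For $u$ microsupported in $\{X^2+\Xi^2\geq 1\}$, the pointwise inequality $\abs{k}\lesssim g$ holds (second line of \eqref{gkineq}) and G{\aa}rding finishes. Combining the prefactor $h^{2m/(m+1)}\tilde{h}^{(m+3)/(m+1)}$ with the loss $\tilde{h}^{-2m/(m+1)}$ produces exactly $h^{2m/(m+1)}\tilde{h}^{-(m-3)/(m+1)}$. No factorization of $e$ is needed: the lower bound on $\Op^w(g)$, which you already plan to invoke, is by itself strong enough to absorb the $L^\infty$ bound on $e$ near the origin.
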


\begin{proof}
Since everything is in the Weyl calculus, only the odd terms in the
exponential composition expansion are non-zero.  Hence the $h^2$ term
is zero in the Weyl expansion.  
Now according to Lemma \ref{l:err} and the standard $L^2$ continuity
theorem for $h$-pseudodifferential operators, we need to estimate a
finite number of 
derivatives of the error:
\[
 | \partial^{\gamma} e_2 | \leq C h^{3}
\sum_{ \gamma_1 + \gamma_2 = \gamma } 
 \sup_{ 
{{( x, \xi) \in T^* \RR }
\atop{ ( y , \eta) \in T^* \RR }}} \sup_{
|\rho | \leq M  \,, \rho \in \NN^{4} }
\left|
\Gamma_{\alpha, \beta, \rho, \gamma}(D)
( \sigma ( D) ) ^{3} q_1 ( x , \xi)  
a ( y, \eta ) 
\right|  
.
\]
However, since $q_1(x, \xi) = \xi^2 - x^{2m}(1 + a(x))$, we have
\[
D_x D_\xi q_1 = D_{\xi}^3 q_1 = 0,
\]
so that
\begin{align*}
\sigma(D)^3 & q_1(x, \xi) a(y, \eta) |_{ x = y , \xi = \eta} \\
& = D_x^3 q_1 D_\eta^3 a |_{ x = y , \xi = \eta} \\
& = c x^{2m-3}(1 + \O(x^{2m})) (\th/h)^{3m/(m+1)} \Lambda'''
((\th/h)^{m/(m+1)} \eta) \\
& \quad \times \Lambda((\th/h)^{1/(m+1)} y) \chi(y)
\chi(\eta)
+ r_3,
\end{align*}
where $r_3$ is supported in $\{ | (x, \xi) | \geq \delta_1 \}$.  
Owing to the cutoffs $\chi(y) \chi(\eta)$ in the definition of $a$
(and the corresponding implicit cutoffs in $q_1$), we only need to
estimate this error in compact sets.  The derivatives
$h^{\beta} \partial_\eta$ and $h^\alpha \partial_y$
preserve the order of $e_2$ in $h$ and increase the order in $\th$, while the other derivatives lead
to higher powers in $h/\th$ in the symbol expansion.  Hence we need only estimate $e_2$,
as the derivatives satisfy similar estimates.

In order to estimate $e_2$, we again use conjugation to
the $2$-parameter calculus.  We have
\[
\| \Op_h^w(e_2) u \| = \| T_{h, \th} \Op_h^w(e_2) T_{h, \th}^{-1}
T_{h, \th} u \| \leq \| T_{h, \th} \Op_h^w(e_2) T_{h, \th}^{-1}
\|_{L^2 \to L^2} \| u \|,
\]
by unitarity of $T_{h, \th}$.  But $T_{h, \th} \Op_h^w(e_2) T_{h,
  \th}^{-1} = \Op_{\th}^w(e_2 \circ \B )$ and
\begin{align*}
e_2 \circ \B & = h^3 (h/\th)^{(2m-3)\alpha} X^{2m-3}(1 + \O(x^{2m})) (\th/h)^{3m/(m+1)} \Lambda'''
(\Xi) \\
& \quad \times \Lambda(X) \chi(x)
\chi(\xi) + r_3 \circ \B ,
\end{align*}
and we may estimate the first term above by
$$
C h^{2m/(m+1)} \th^{(m+3)/(m+1)}X^{2m-3} \Lambda'''(\Xi) \chi(x)
\chi(\xi),
$$
which in turn is bounded above by
\begin{equation}\label{gkineq}
\begin{cases}
C h^{2m/(m+1)} \th^{(m+3)/(m+1)} , \,\, |X| \leq 1, \\
C h^{2m/(m+1)} \th^{(m+3)/(m+1)} g, \,\, | X | \geq 1.
\end{cases}
\end{equation}
It now suffices to verify that for 
$$
k=X^{2m-3} \Lambda'''(\Xi) \chi(x)
\chi(\xi),
$$
$$
\Op_{h}^w(k\circ \B^{-1}) \leq C \th^{-2m/(m+1)}\Op_{h}^w (g\circ \B^{-1}),
$$
i.e., that for all $u(X),$
$$
\ang{\Op_{h}^w(k\circ \B^{-1})u,u} \leq C
\th^{-2m/(m+1)}\ang{\Op_{h}^w (g\circ \B^{-1})u,u}.
$$
We now rescale and return to performing $\th$ quantization in the $X$
variable.  For $u(X)$ microsupported away from the origin in
$(X,\Xi),$ the desired estimate follows from the second inequality in
\eqref{gkineq} (indeed the $\th^{-2m/(m+1)}$ factor on the RHS may be
omitted), while for $u$ microsupported near the origin, it follows
from the lower bound of Lemma~\ref{lemma:positivity0}.
\end{proof}

We are now able to prove the resolvent estimate Lemma \ref{L:ml-inv}.
Let $v=\varphi^w u,$ with $\varphi$ chosen to have support inside the
set where $\chi(x)\chi(\xi)=1;$ thus the terms $r$ and $r_3$ above are supported
away from the support of $\varphi.$  Then
Lemmas \ref{lemma:positivity1} and \ref{L:Q-comm-error} yield
\begin{align*}
i\ang{[Q_1-z,a^w]v,v}&=h\ang{\Op_h^w(\hamvf(a))v,v}+\ang{\Op_h^w(e_2)u,u}
\\
&= h (h/\th)^{(m-1)/(m+1)} \ang{\Op_h^w(g)v,v}+\ang{\Op_h^w(e_2)u,u}
\\
&= h^{2m/(m+1)}\big( \th^{-(m-1)/(m+1)}+\O(\th^{-(m-3)/(m+1)})\big)\ang{\Op_h^w(g)v,v}
\\
&\geq C
h^{2m/(m+1)} \th \norm{v}^2,
\end{align*}
 for
$\th$ sufficiently small.
On the other hand, we certainly have
$$
\big\lvert \ang{[Q_1-z,a^w]v,v}\big\rvert \leq C \norm{(Q_1-z)v}\norm{v},
$$
hence the desired bound follows once we fix $\th>0$. \qed

\section{Resonances and Quasimodes}

In this section, we construct quasimodes for the model
operator near $(0,0)$ in phase space.  
Let 
\[
\tP = -h^2 \partial_x^2 - m^{-1} x^{2m}
\]
locally near $x = 0$.  We will construct quasimodes which are
localized very close to $x = 0$, so this should be a decent
approximation.


Complex scaling $(x, \xi ) \mapsto (e^{i \pi/(2m+2)} x, e^{-i \pi /(2m+2)}
\xi)$ sends $\tP$ to a multiple of the quantum anharmonic oscillator.  
As in
the appendix, we find there is a Schwartz class function $v(x) = v_0 ( x
h^{-1/(m+1)})$ which is an un-normalized ground state for the equation
\[
(-h^2 \partial_x^2 + m^{-1} x^{2m} ) v = h^{2m/(m+1)} \lambda_0 v.
\]
This suggests there are resonances for the operator
$\tP$ with imaginary part to leading order $c_0
h^{2m/(m+1)}$, although this is only a heuristic.  We use a complex WKB approximation to get an explicit
formula for a localized approximate resonant state.

Let $E = (\alpha + i \beta)h^{2m/(m+1)} $, $\alpha, \beta>0$
independent of $h$.  Let the phase function 
\[
\varpi(x) = \int_0^x (E + m^{-1} y^{2m})^{1/2} dy,
\]
where the branch of the square root is chosen to have positive
imaginary part.  Let 
\[
u(x) = (\varpi')^{-1/2} e^{i \varpi  / h},
\]
so that
\[
(hD)^2 u = (\varpi')^2 u + f u,
\]
where
\begin{align*}
f & = (\varpi')^{1/2} (hD)^2 (\varpi')^{-1/2} \\
& = -h^2 \left( \frac{3}{4} (\varpi')^{-2} (\varpi'')^2 - \frac{1}{2}
  (\varpi')^{-1} \varpi ''' \right).
\end{align*}

\begin{lemma}
The phase function $\varpi$ satisfies the following properties:
\begin{description}

\item[(i)]  There exists $C>0$ independent of $h$ such that 
\[
| \Im \varpi | \leq C\begin{cases} h(1 + \log(x/h^{1/2} )), \quad m =
  1, \\
 h, \quad m \geq 2.
\end{cases}
\]
In particular, if $| x | \leq C h^{1/(m+1)}$, $| \Im \varpi| \leq C'$
for some $C'>0$ independent of $h$.

\item[(ii)]  There exists $C>0$ independent of $h$ such that 
\[
C^{-1} \sqrt{ h^{2m/(m+1)} + m^{-1}  x^{2m} } \leq | \varpi'(x) | \leq C \sqrt{
  h^{2m/(m+1)} + m^{-1}  x^{2m} }
\]

\item[(iii)]
\[
\begin{cases}
\varpi' = (E + m^{-1} x^{2m})^{1/2}, \\
\varpi'' = x^{2m-1} (\varpi')^{-1}, \\
\varpi''' = \left( (1-1/m)  x^{4m-2} + E  (2m-1) x^{2m-2}
\right) ( \varpi')^{-3}, 
\end{cases}
\]
In particular,
\[
f = -h^2 x^{2m-2} \left( \left( \frac{1}{4}  + \frac{1}{2m} 
  \right) x^{2m} -  \left( m - \frac{1}{2} \right) E \right) (\varpi'
)^{-4}.
\]

\end{description}

\end{lemma}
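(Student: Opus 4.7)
\medskip

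\noindent\textbf{Proof plan.}  The three properties are essentially direct computations with $\varpi'(x)=(E+m^{-1}x^{2m})^{1/2}$; the only place where care is required is the scaling analysis for the imaginary part in (i).

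\smallskip

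I would dispose of (iii) first.  The formula for $\varpi'$ is just the fundamental theorem of calculus.  Differentiating once gives
\[
\varpi''=\tfrac{1}{2}(E+m^{-1}x^{2m})^{-1/2}\cdot 2x^{2m-1}=x^{2m-1}(\varpi')^{-1}.
\]
Differentiating again and substituting $\varpi''=x^{2m-1}(\varpi')^{-1}$ and $(\varpi')^2=E+m^{-1}x^{2m}$, the two resulting powers of $x^{4m-2}$ combine with coefficient $(1-1/m)$, yielding the stated formula for $\varpi'''$.  Plugging $\varpi''$ and $\varpi'''$ into $f=-h^2\bigl(\tfrac{3}{4}(\varpi')^{-2}(\varpi'')^2-\tfrac{1}{2}(\varpi')^{-1}\varpi'''\bigr)$ and collecting terms gives the simplification in (iii).

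\smallskip

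For (ii), write $E+m^{-1}x^{2m}=a+ib$ with
\[
a=\alpha h^{2m/(m+1)}+m^{-1}x^{2m}>0,\qquad b=\beta h^{2m/(m+1)}>0.
\]
Since both $a,b>0$, one has $\tfrac{1}{\sqrt{2}}(a+b)\leq\sqrt{a^2+b^2}\leq a+b$, and hence $|\varpi'|^2=|a+ib|=\sqrt{a^2+b^2}$ is comparable to $a+b$, which is itself comparable to $h^{2m/(m+1)}+m^{-1}x^{2m}$ (uniformly in $h$, depending only on $\alpha,\beta$).  Taking square roots gives (ii).

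\smallskip

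The main content is (i).  I would derive the explicit identity
\[
\Im\,(a+ib)^{1/2}=\frac{b}{\sqrt{2\bigl(\sqrt{a^2+b^2}+a\bigr)}}
\]
(valid for $a,b>0$, with the branch of positive imaginary part), and then split the integral defining $\varpi(x)$ at the natural scale $y=h^{1/(m+1)}$.  In the inner region $|y|\leq h^{1/(m+1)}$, the denominator is bounded below by a constant multiple of $h^{m/(m+1)}$, so the integrand is $O(h^{m/(m+1)})$ and the contribution to $|\Im\varpi|$ is at most $O(h^{m/(m+1)}\cdot h^{1/(m+1)})=O(h)$; in particular this already proves the last clause of (i).  In the outer region $|y|\geq h^{1/(m+1)}$, we have $a\sim m^{-1}y^{2m}$, so the integrand is bounded by $C\beta h^{2m/(m+1)}/y^m$.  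For $m\geq 2$, the integral $\int_{h^{1/(m+1)}}^{|x|}y^{-m}\,dy$ is dominated by $(h^{1/(m+1)})^{-(m-1)}/(m-1)$, giving a total contribution of $O(h^{2m/(m+1)-(m-1)/(m+1)})=O(h)$.  For $m=1$ the outer integral is logarithmic, producing the stated $h\bigl(1+\log(x/h^{1/2})\bigr)$ bound.

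\smallskip

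I expect the main (small) obstacle to be keeping the scaling analysis in (i) organized: one needs the observation that $b/\sqrt{\sqrt{a^2+b^2}+a}$ collapses to $\sqrt{b}\cdot O(1)$ when $a\lesssim b$ and to $b/(2\sqrt{a})$ when $a\gg b$, and then to balance the two regimes at the correct scale.  Beyond that the proof is essentially calculus.
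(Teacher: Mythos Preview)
Your proposal is correct and follows essentially the same route as the paper: for (i) the paper writes $\varpi'=s+it$, extracts $t=\beta h^{2m/(m+1)}/(2s)$ with $s\geq\sqrt{\alpha h^{2m/(m+1)}+m^{-1}x^{2m}}$ (equivalent to your explicit formula for $\Im\sqrt{a+ib}$), and then splits the integral at $y=h^{1/(m+1)}$ exactly as you do; for (ii) and (iii) the paper simply declares them ``simple computations,'' which is what you carry out.
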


\begin{proof}
For (i) we write $\varpi' = s + it$ for $s$ and $t$ real valued, and then
\[
E + m^{-1} x^{2m} = s^2 - t^2 + 2 i st.
\]
Hence
\[
s^2 \geq s^2 - t^2 = \alpha h^{2m/(m+1)} + m^{-1}  x^{2m} ,
\]
so that
\[
t = \frac{\beta h^{2m/(m+1)}}{2s} \leq \frac{\beta
  h^{2m/(m+1)}}{2\sqrt{h^{2m/(m+1)} \alpha + m^{-1}  x^{2m}}}.
\]
Then
\begin{align*}
| \Im \varpi (x) | & \leq \int_0^{|x|} | \Im \varpi'(y)| dy \\
& \leq C \int_0^{h^{1/(m+1)}} h^{m/(m+1)} dy + C \int_{h^{1/(m+1)}}^x
h^{2m/(m+1)} y^{-m} dy \\
& = \begin{cases} \O ( h(1 + \log (x/h^{1/2}))), \quad m = 1, \\
\O(h), \quad m >1. \end{cases}
\end{align*}

Parts (ii) and (iii) are simple computations.

\end{proof}

In light of this lemma, $| u (x) |$ is comparable to $| \varpi'
|^{-1/2}$ for all $x$ for $m \geq 2$, and provided $| x | \leq C h^{1/2}$ when
$m=1$.  We are only interested in sharply localized quasimodes, so let
$$\gamma = h^{1/(m+1)},$$ choose $\chi(s) \in \Ci_c( \reals)$ such that
$\chi \equiv 1$ for $| s | \leq 1$ and $\supp \chi \subset [-2,2]$.
Let
\[
\tu(x) = \chi(x/\gamma) u(x),
\]
and, since $| \varpi'(x) | \sim h^{m/(m+1)}$ for $| x | \leq 2
h^{1/(m+1)}$, we compute: 
\begin{align*}
\| \tu \|_{L^2}^2 & = \int_{| x | \leq 2 \gamma} \chi(x/\gamma)^2 | u
|^2 dx \\
& \sim \int_{|x| \leq 2\gamma} \chi(x/\gamma)^2 | \varpi' |^{-1} dx \\
& \sim h^{1/(m+1)} h^{ -m/(m+1)} \\
& \sim h^{(1-m)/(1+m)}.
\end{align*}

Further, $\tu$ satisfies the following equation:
\begin{align*}
(hD)^2 \tu & = \chi(x/\gamma) (hD)^2 u + [(hD)^2, \chi(x/\gamma)] u \\
& = (\varpi')^2 \tu + f \tu + [(hD)^2, \chi(x/\gamma)] u \\
& = (\varpi')^2 \tu + R,
\end{align*}
where 
\[
R = f \tu + [(hD)^2, \chi(x/\gamma)] u.
\]

\begin{lemma}
The remainder $R$ satisfies
\begin{equation}
\label{E:R-remainder}
\| R \|_{L^2} = \O (h^{2m/(m+1)}) \| \tu \|_{L^2}.
\end{equation}

\end{lemma}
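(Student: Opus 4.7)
\medskip

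\noindent\textbf{Proof proposal.} The plan is to bound the two pieces of $R=f\tu + [(hD)^2,\chi(x/\gamma)]u$ separately. The first piece lives on $\supp\tu \subset \{|x|\leq 2\gamma\}$ where $\gamma=h^{1/(m+1)}$, while the commutator piece is supported on the thin annulus $\{\gamma \leq |x| \leq 2\gamma\}$ of measure $\sim \gamma$. In either region, parts (i)--(iii) of the previous lemma give $|\varpi'|\sim h^{m/(m+1)}$, and for $m\geq 2$ we have $|\Im\varpi|\leq Ch$, so $|e^{i\varpi/h}|$ is uniformly bounded and the pointwise size of $u$ is governed by $|\varpi'|^{-1/2}\sim h^{-m/(2(m+1))}$.

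\smallskip

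\noindent\emph{Step 1: the term $f\tu$.} Using the explicit formula
\[
f=-h^2 x^{2m-2}\!\left(\Big(\tfrac{1}{4}+\tfrac{1}{2m}\Big)x^{2m}-\Big(m-\tfrac{1}{2}\Big)E\right)(\varpi')^{-4},
\]
and the bounds $|x|\leq 2h^{1/(m+1)}$, $|E|\sim h^{2m/(m+1)}$, $|(\varpi')^{-4}|\leq C h^{-4m/(m+1)}$, I would count powers: $h^{2}\cdot h^{(2m-2)/(m+1)}\cdot h^{2m/(m+1)}\cdot h^{-4m/(m+1)}=h^{2m/(m+1)}$. Thus $|f|\leq C h^{2m/(m+1)}$ pointwise on $\supp\tu$, which immediately yields $\|f\tu\|_{L^2}\leq C h^{2m/(m+1)}\|\tu\|_{L^2}$.

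\smallskip

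\noindent\emph{Step 2: the commutator term.} Expanding,
\[
[(hD)^2,\chi(x/\gamma)] = -h^2\bigl(\gamma^{-2}\chi''(x/\gamma) + 2\gamma^{-1}\chi'(x/\gamma)\partial_x\bigr),
\]
with both factors of $\chi',\chi''$ supported in the annulus $A_\gamma:=\{\gamma\leq|x|\leq 2\gamma\}$. On $A_\gamma$, Lemma (ii) gives $|u|\leq C h^{-m/(2(m+1))}$, and differentiating $u=(\varpi')^{-1/2}e^{i\varpi/h}$ together with (iii) yields $|\partial_x u|\leq C h^{-(m+2)/(2(m+1))}$ (the contribution from hitting the exponential dominates, and the $(\varpi')^{-3/2}\varpi''$ piece has matching size). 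Multiplying by the prefactors $h^2\gamma^{-2}=h^{2-2/(m+1)}$ and $h^2\gamma^{-1}=h^{2-1/(m+1)}$ and taking $L^2$ over a set of measure $\sim\gamma$, both pieces contribute $\|\cdot\|_{L^2}\leq C h^{(3m+1)/(2(m+1))}$.

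\smallskip

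\noindent\emph{Step 3: assembly.} From the already-computed $\|\tu\|_{L^2}^2\sim h^{(1-m)/(m+1)}$, the target $h^{2m/(m+1)}\|\tu\|_{L^2}$ has size $h^{(3m+1)/(2(m+1))}$, matching both estimates above exactly. Combining Steps 1 and 2 gives \eqref{E:R-remainder}.

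\smallskip

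\noindent\emph{Main obstacle.} The only real work is careful bookkeeping of fractional powers of $h$, and double-checking that Lemma~(i) really gives a uniformly bounded exponential factor on the full support $|x|\leq 2\gamma$ for $m\geq 2$—this is precisely why $\gamma=h^{1/(m+1)}$ is the natural cutoff scale, and why the "interior" error $f\tu$ and the "cutoff" commutator error balance at exactly the same order $h^{2m/(m+1)}$.
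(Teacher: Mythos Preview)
Your proof is correct and follows essentially the same route as the paper's: both split $R$ into the $f\tu$ piece (handled by the $L^\infty$ bound on $f$ coming from the explicit formula in part (iii) of the previous lemma) and the commutator piece (handled by expanding $[(hD)^2,\chi(x/\gamma)]$ and using the pointwise size of $u$ and its derivative on the annulus), then match against $\|\tu\|_{L^2}\sim h^{(1-m)/(2(m+1))}$. The only cosmetic difference is that the paper writes $hDu=\big(-\tfrac{h}{2i}\varpi''/\varpi'+\varpi'\big)u$ and estimates the two resulting pieces as multiples of $u$, whereas you bound $|\partial_x u|$ directly; the arithmetic is identical.
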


\begin{proof}
We have already computed the function $f$, which is readily seen to
satisfy 
\[
\| f \|_{L^\infty(\supp (\tu ))} = \O(h^{2m/(m+1)}),
\]
since $\supp (\tu) \subset \{ | x | \leq 2 h^{1/(m+1)}$.

On the other hand, since $\| \tu \|_{L^2} \sim h^{(1-m)/2(1+m)}$, we need only show that
\[
\| [(hD)^2, \chi(x/\gamma)] u\|_{L^2} \leq C h^{(3m+1)/2(m+1)}.
\]
We compute:
\begin{align*}
[(hD)^2, \chi(x/\gamma)] u & = -h^2 \gamma^{-2} \chi'' u + 2\frac{h}{i}
\gamma^{-1} \chi' hD u \\
& = -h^2 \gamma^{-2} \chi'' u + 2\frac{h}{i}
\gamma^{-1} \chi'  \left(-\frac{h}{2i} \frac{\varpi''}{\varpi'} + \varpi'
\right) u \\
& = -h^2 \gamma^{-2} \chi'' u + 2\frac{h}{i}
\gamma^{-1} \chi' \left( -\frac{h}{2i} \frac{ x^{2m-1}}{(\varpi')^2} +
  \varpi' \right) u.
\end{align*}
The first term is estimated:
\[
\| h^2 \gamma^{-2} \chi'' u \|_{L^2} = \O(h^{2m/(m+1)}) \| u
\|_{L^2(\supp ( \tu))} =\O(h^{(3m+1)/2(m+1)}).
\]
Similarly, the remaining two terms are estimated:
\begin{align*}
& \left\| 2\frac{h}{i}
\gamma^{-1} \chi' \left( -\frac{h}{2i} \frac{ x^{2m-1}}{(\varpi')^2} +
  \varpi' \right) u \right\|_{L^2} \\
 & \quad = \O(h^{m/(m+1)} h^1 h^{(2m-1)/(m+1)}
h^{-2m/(m+1)}) \| u
\|_{L^2(\supp ( \tu))}  \\
& \quad \quad + \O(h^{m/(m+1)} h^{2m/(m+1)} )  \| u
\|_{L^2(\supp ( \tu))}  \\
& \quad = \O(h^{(3m+1)/2(m+1)}).
\end{align*}

\end{proof}

\subsection{Sharp local smoothing of quasimodes}

In this subsection, we show that the quasimode $\tu$ constructed above
can be used to 
saturate the local smoothing estimate of Theorem \ref{T:smoothing}.
We observe that due to the estimate \eqref{E:est-away-0}, we have
perfect local smoothing in the ``radial'' direction $x$.  Hence we
only consider $\theta$ regularity.

\begin{theorem}
\label{T:sharp}

Let $\phi_0(x, \theta) = e^{ik \theta} \tu(x)$, where $\tu \in \Ci_c (
\reals)$ was constructed in the previous section, with $h=\abs{k}^{-1},$ where $|k|$ is
taken sufficiently large, and the parameter $m \geq 2$ as usual.  Suppose $\psi$
solves
\[
\begin{cases}
(D_t + \tDelta) \psi = 0, \\
\psi|_{t=0} = \phi_0.
\end{cases}
\]
Then for any $\chi
\in \Ci_c( \reals)$ such that $\chi \equiv 1$ on $\supp \tu$, and
$A>0$ sufficiently large, independent of $k$, there exists a constant
$C_0>0$ independent of $k$ such that 
\begin{equation}\label{saturated}
\int_0^{|k|^{-2/(m+1)}/A} \| \lll D_\theta \rrr \chi \psi \|_{L^2}^2
dt \geq C_0^{-1} \| \lll D_\theta \rrr^{m/(m+1)} \phi_0 \|_{L^2}^2.
\end{equation}

\end{theorem}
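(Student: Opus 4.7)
\textbf{Proof plan for Theorem~\ref{T:sharp}.}

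First I would reduce to a single Fourier mode. Since $\phi_0(x,\theta) = e^{ik\theta}\tu(x)$, separation of variables gives $\psi(t,x,\theta) = e^{ik\theta}\tpsi(t,x)$ where $\tpsi$ solves $D_t\tpsi = P_k\tpsi$ with $\tpsi(0)=\tu$, for the self-adjoint operator $P_k = -\p_x^2 + k^2 A^{-2}(x) + V_1(x)$ on $L^2(\reals)$. As $D_\theta\psi = k\psi$ and $\|\phi_0\|_{L^2(X)}^2 = 2\pi\|\tu\|_{L^2(\reals)}^2$, while from the previous subsection $\|\tu\|_{L^2(\reals)}^2 \sim |k|^{(m-1)/(m+1)}$, the estimate \eqref{saturated} is equivalent to
\[
\int_0^{|k|^{-2/(m+1)}/A} \|\chi\tpsi(t)\|_{L^2(\reals)}^2\, dt \geq c A^{-1} |k|^{-2/(m+1)}\|\tu\|_{L^2(\reals)}^2
\]
for some $c>0$ independent of $A$ and $k$.

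Next I would promote the model quasimode identity $\tP\tu = E\tu + R$ to one for $P_k$. Here $\tP = (hD_x)^2 - m^{-1}x^{2m}$, $h=|k|^{-1}$, $E = (\alpha+i\beta)h^{2m/(m+1)}$, and $\|R\|_{L^2} = \O(h^{2m/(m+1)})\|\tu\|_{L^2}$. Because $\supp\tu \subset \{|x|\leq 2h^{1/(m+1)}\}$, the Taylor expansion $A^{-2}(x) = 1 - m^{-1}x^{2m} + \O(x^{4m})$ shows $h^2 P_k = \tP + 1 + \O(h^{4m/(m+1)}) + h^2 V_1$ on $\supp\tu$. Multiplying by $h^{-2}$ and setting $\mu_k := |k|^2 + \alpha|k|^{2/(m+1)} \in \reals$ produces
\[
P_k\tu = \mu_k\tu + R_k, \qquad \|R_k\|_{L^2(\reals)} \leq C|k|^{2/(m+1)}\|\tu\|_{L^2(\reals)},
\]
since the imaginary part $i\beta|k|^{2/(m+1)}\tu$, the rescaled Taylor remainder $\O(|k|^{-(2m-2)/(m+1)})\tu$, and $V_1\tu$ each satisfy the claimed bound.

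Then, because $P_k$ is self-adjoint, a Duhamel argument applied to $w(t) := \tpsi(t) - e^{it\mu_k}\tu$ (which solves $(D_t-P_k)w = e^{it\mu_k}R_k$ with $w(0)=0$) yields
\[
\|w(t)\|_{L^2(\reals)} \leq t\|R_k\|_{L^2(\reals)} \leq Ct|k|^{2/(m+1)}\|\tu\|_{L^2(\reals)}.
\]
Choosing $A$ large enough that this bound is at most $\|\tu\|_{L^2}/2$ throughout $[0, |k|^{-2/(m+1)}/A]$, and using $\chi\equiv 1$ on $\supp\tu$ together with $|e^{it\mu_k}|=1$, forces $\|\chi\tpsi(t)\|_{L^2(\reals)} \geq \|\tu\|_{L^2(\reals)}/2$ uniformly on the interval; integrating in $t$ delivers the required lower bound.

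The principal delicacy lies in Step~2: one must verify that every correction to $\tP$---the Taylor remainder in $A^{-2}$, the subprincipal $h^2V_1$, and the imaginary part of $E$---all enter at the single scale $|k|^{2/(m+1)}\|\tu\|_{L^2}$, which is exactly the critical size the Duhamel bound can absorb over the weak semiclassical time window $|k|^{-2/(m+1)}$. The tight width $h^{1/(m+1)}$ of $\supp\tu$ is what makes all corrections simultaneously of this order, so that the approximate eigenvalue $\mu_k$ may be taken real and the heuristic resonance imaginary part is demoted to just another error in $R_k$.
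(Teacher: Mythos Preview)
Your proof is correct, and in fact cleaner than the paper's. The essential difference is in how the approximate eigenvalue is chosen. The paper keeps the complex value $\tau = k^2(1+E) = k^2 + \alpha|k|^{2/(m+1)} + i\beta|k|^{2/(m+1)}$ coming from the WKB resonance construction, so its approximate solution $\phi = e^{it\tau}\phi_0$ decays in time; it then computes the saturation integral for $\phi$ explicitly (producing the factor $B$ in \eqref{Bdefn}) and controls the Duhamel correction $\phi_{\text{ih}}$ via an energy estimate with an auxiliary Gr\"onwall parameter $\nu$, finally taking $\nu = T = |k|^{-2/(m+1)}/A$. You instead take the approximate eigenvalue $\mu_k$ \emph{real}, demoting the imaginary part $i\beta|k|^{2/(m+1)}\tu$ to the error $R_k$; this renders the reference evolution $e^{it\mu_k}\tu$ unitary, so the lower bound $\|\chi\tpsi(t)\| \geq \tfrac12\|\tu\|$ is immediate and the Duhamel estimate collapses to the elementary $\|w(t)\| \leq t\|R_k\|$. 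Your argument is shorter and avoids the parameter juggling; the paper's version stays closer to the resonance heuristic (the decay rate of $\phi$ is exactly the putative resonance width) and its computation \eqref{E:sharp-smoothing} exhibits saturation of $\phi$ on \emph{every} time scale, though the control of $\phi_{\text{ih}}$ still restricts the final conclusion to the weak semiclassical window, so the end results coincide.
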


\begin{remark}
  The theorem states that on the {\it weak semiclassical} time scale
  $| t | \lesssim |k|^{-2/(m+1)}$, the
  local smoothing estimate in this paper is sharp.  Evidently this
  implies that on any fixed finite time scale the theorem is sharp;
  the theorem stated above gives more information.  That is, it
  demonstrates that even on a semiclassical time scale, the local
  smoothing estimate really cannot be improved.

In addition, as the proof will indicate, no weight $\chi$ is
necessary, because on the semiclassical time scale we have essentially
finite propagation speed.

\end{remark}

\begin{remark}
The analogue of Theorem \ref{T:sharp} when the parameter $m = 1$,
where a log loss is expected, is contained in the work of
Bony-Burq-Ramond \cite{BBT-lower}.

\end{remark}

\begin{proof}

The technique of proof is to simply evolve the stationary quasimode as
if the equation were separated.  The advantage is that this separated
stationary ``solution'' remains compactly supported for all time.  Of
course this is not an exact solution, and generates an
inhomogeneous error which must be estimated using energy estimates.
It is here that we use the semiclassical time scale.  Combining the
two estimates yields the theorem.

Let $\tu$ be as above for $h =
| k |^{-1}$ (after a suitable $L^2$ normalization), and let
\[
\phi_0 (x, \theta) = \tu e^{ik \theta},
\]
as in the statement of the theorem.  
Let $$\phi(t, x, \theta) = e^{it \tau} \phi_0$$ for some $\tau \in \cx$
to be determined.  Since the support of $\tu$ is very small, contained
in $\{ | x | \leq h^{1/(m+1)} / \gamma \}$, we have 
\[
A^{-2} = (1 + x^{2m} )^{-1/m} = 1 -\frac{1}{m} x^{2m} +
\O(h^{4m/(m+1)})
\]
on $\supp \tu$.  Then
\begin{align*}
(D_t + \tDelta) \phi & = (D_t + P_k )\phi \\
& = ( \tau - D_x^2 - A^{-2} k^2 - V_1(x) ) \phi \\
& = k^2 e^{it \tau} e^{i k \theta} \left[\left( \tau k^{-2} - (k^{-2}D_x^2 + 1 - \frac{1}{m}
  x^{2m} ) \right) \tu  +  \O( k^{-2}) \tu \right] \\
& = k^2 e^{it \tau} e^{i k \theta}  \left[ \left( \tau k^{-2} - 1 -
    E_0 \right) \tu + R + \O( k^{-2}) \tu \right],
\end{align*}
where $R$ satisfies the remainder estimate \eqref{E:R-remainder},
i.e.,
$$
\|R\|_{L^2} = \O ( h^{2m/(m+1)}).
$$  Set
\[
\tau = k^2 + k^2 E_0 = k^2(1 + \alpha k^{-2m/(m+1)}) + i \beta
k^{2/(m+1)}, \,\,\, \alpha, \beta >0
\]
so that we have
\[
\begin{cases}
(D_t + \tDelta) \phi = \tR, \\
\phi(0, x, \theta) = \phi_0
\end{cases}
\]
with
\begin{equation}
\label{E:tR}
\tR = k^2 e^{i t \tau } e^{i k \theta} (R(x, k) + \O(k^{-2}) \tu ).
\end{equation}
Thus, we obtain
\begin{equation}\label{tRestimates}
\|\tR\| \leq C k^2 \lvert e^{it\tau}\rvert k^{-2m/(m+1)} \|\tu\|
\end{equation}
and since on every function in question, $\lll D_\theta \rrr = \lll k
\rrr,$ we furthermore have
\begin{equation}\label{tRestimates2}
\|\lll D_\theta\rrr \tR\| \leq C k^2 \lvert e^{it\tau}\rvert k^{-2m/(m+1)} \|\lll D_\theta\rrr\phi_0\|
\end{equation}

We can readily verify that $\phi$ saturates the local smoothing
estimate of Theorem \ref{T:smoothing} {\it on any time scale}:
\begin{align}
\| \lll D_\theta \rrr \phi \|_{L^2([0,T])L^2}^2 & = \int_0^T \| e^{it \tau} \lll D_\theta \rrr\phi_0
\|^2_{L^2} dt \notag \\
& = \int_0^T e^{-2t\beta |k|^{2/(m+1)}} \| \lll D_\theta \rrr \phi_0 \|_{L^2}^2 dt \notag
\\
& = \frac{1 - e^{-2 T\beta | k |^{2/(m+1)}}}{2 \beta |k|^{2/(m+1)}} \|
\lll D_\theta \rrr \phi_0
\|_{L^2}^2 \notag \\
& = \frac{1 - e^{-2 T\beta | k |^{2/(m+1)}}}{ 2 \beta } \| |
D_\theta|^{-1/(m+1)} \lll D_\theta \rrr
\phi_0 \|_{L^2}^2 \notag \\
& = B \| |
D_\theta|^{-1/(m+1)} \lll D_\theta \rrr
\phi_0 \|_{L^2}^2,
 \label{E:sharp-smoothing}
\end{align}
where we let
\begin{equation}\label{Bdefn}
B= \frac{1 - e^{-2 T\beta | k |^{2/(m+1)}}}{ 2 \beta }.
\end{equation}
What we aim to do, then, is show that $\phi$ is close enough to a
solution of the Schr\"odinger equation that the same form of estimate
holds for the nearby solution with initial data $\phi_0.$

Thus, let $L(t)$ be the unitary Schr\"odinger propagator:
\[
\begin{cases}
(D_t + \tDelta) L = 0, \\
L(0) = \id,
\end{cases}
\]
and write using Duhamel's formula:
\[
\phi(t) = L(t) \phi_0 + i \int_0^t L(t) L^*(s) \tR(s) ds =: \phi_{\text{h}} + \phi_{\text{ih}},
\]
where $\phi_{\text{h}}$ and $\phi_{\text{ih}}$ are the homogeneous and
inhomogeneous parts respectively.  We want to show the 
homogeneous smoothing effect is saturated, for which we need to show
the 
inhomogeneous term is small and can be absorbed into the homogeneous term.  

For this, we use an energy estimate, and localize in time to a scale
depending on $k$.  That is, if $$E(t) = \| \lll D_\theta \rrr \phi_{\text{ih}} \|^2_{L^2},$$ we
have
\begin{align*}
E' & = 2 \Re \lll \lll D_\theta \rrr \partial_t \phi_{\text{ih}}, \lll
D_\theta \rrr \phi_{\text{ih}}
\rrr_{L^2} \\ 
& = 2 \Re \lll \lll D_\theta \rrr ( -i \tDelta \phi_{\text{ih}} + i
\tR) , \lll D_\theta \rrr\phi_{\text{ih}}
\rrr_{L^2} \\
& = 2 \Re \lll  i \lll D_\theta \rrr \tR, \lll D_\theta \rrr\phi_{\text{ih}}
\rrr_{L^2} \\
& \leq \nu \| \lll D_\theta \rrr \tR \|_{L^2}^2 + \nu^{-1} E,
\end{align*}
for $\nu>0$ to be determined, so that
\begin{align*}
E(t) & \leq e^{\nu^{-1}t} \int_0^t e^{-\nu^{-1} s} \left( \nu \|
\lll D_\theta \rrr \tR(s) \|_{L^2}^2 \right) ds \\
& \leq e^{\nu^{-1} t} \left( \nu \| \lll D_\theta \rrr \tR
  \|_{L^2([0,T]) L^2 }^2 \right) \\
& \leq C \nu e^{\nu^{-1}t} \left( \int_0^T | e^{2is \tau} | ds  \right)(k^2
k^{-2m/(m+1)})^2 \| \lll D_\theta \rrr \phi_0 \|_{L^2}^2 \\
& \leq C (\nu e^{\nu^{-1}t}) \left( \frac{ 1 - e^{-2 \beta |k|^{2/(m+1)} T}}{2
  \beta}  \right) \\
& \quad \times |k|^{-2/(m+1)} (k^2
k^{-2m/(m+1)})^2 \| \lll D_\theta \rrr \phi_0 \|_{L^2}^2.
\end{align*}
Here we have used the estimate \eqref{tRestimates2} in the middle inequality.
Integrating in $0 \leq t \leq T$, we get
\begin{align*}
\| \lll D_\theta \rrr \phi_{\text{ih}} \|^2_{L^2([0,T]) L^2} & \leq C
\nu \left( \int_0^T
e^{\nu^{-1}t} dt \right)  B |k|^{2/(m+1)} \| \lll D_\theta \rrr \phi_0 \|_{L^2}^2 \\
& \leq C\nu^2 (e^{\nu^{-1} T} - 1) B |k|^{2/(m+1)} \| \lll
D_\theta \rrr \phi_0
\|_{L^2}^2,
\end{align*}
where $B$ is given by \eqref{Bdefn}.

If $\nu = |k|^{-2/(m+1)}/A$ for some large $A>0$ independent of
$k$, and if we take $T = \nu$, we have
\begin{align}
\| \lll D_\theta \rrr \phi_{\text{ih}} \|^2_{L^2([0,T]) L^2} & \leq \frac{C}{A^2} (e-1) B
|k|^{-2/(m+1)} \| \lll D_\theta \rrr \phi_0
\|_{L^2}^2 \notag \\
& = c_0 B \| |D_\theta|^{-1/(m+1)}\lll D_\theta \rrr \phi_0 \|^2_{L^2}, \label{E:phi-ih}
\end{align}
where
\[
c_0 = \frac{C}{A^2} (e-1) \leq \frac{1}{4}
\]
if we take $A$ sufficiently large.

Now, since $T$ depends on $k$, so does $B$ in principle, but 
\[
B = \left( \frac{ 1 - e^{-2 \beta |k|^{2/(m+1)} T}}{2
  \beta} \right) = \left( \frac{ 1 - e^{-2 \beta /A}}{2
  \beta} \right) >0
\]
independent of $k$.  Hence, combining \eqref{E:sharp-smoothing} with
\eqref{E:phi-ih}, we have for this choice of $T$ and $A$, and $\chi$
as in the statement of the theorem, 
\begin{align*}
\| \lll D_\theta \rrr  \chi L(t) \phi_0 \|_{L^2([0,T]) L^2} & \geq \| \lll
D_\theta \rrr \chi \phi \|_{L^2([0,T]) L^2}
- \| \lll D_\theta \rrr \chi \phi_{\text{ih}} \|_{L^2([0,T]) L^2} \\
& \geq \| \lll
D_\theta \rrr  \phi \|_{L^2([0,T]) L^2}
- \| \lll D_\theta \rrr \phi_{\text{ih}} \|_{L^2([0,T]) L^2} \\
& \geq \frac{1}{2} B^{1/2} \| | D_\theta|^{-1/(m+1)} \lll D_\theta
\rrr \phi_0 \|_{L^2}
\end{align*}
(using $\supp \phi\subset \{\chi=1\}$), with $B>0$ independent
of $k$ as above.

Thus, $\psi=\phi_{\text{h}}$ satisfies \eqref{saturated}, since
$\phi$ satisfies the estimate, while $\phi_{\text{ih}}$ is small
enough to be absorbed.
\end{proof}

\section{Sharp resolvent estimates with loss}

We now  prove Theorem~\ref{T:resolvent}.

We begin with the sharpness.  By the $T
T^*$ argument (see, for example \cite{Chr-sch2},
if a better resolvent estimate held true, then a better
local smoothing estimate would also hold true.  But Theorem \ref{T:sharp}
shows the local smoothing estimate in Theorem \ref{T:smoothing} is
sharp.  Hence no better polynomial rate of decay in the resolvent
estimate can hold true.

In order to show the estimate is true in the first place, we conjugate 
the Laplacian on $X$ and decompose in Fourier modes as usual
\[
(-\tDelta -\lambda^2) = \bigoplus_{k = - \infty}^\infty (L_k -
\lambda^2).
\]
We break this sum into two pieces where either $k^2 \leq \lambda^2/2$
or not.  If $k^2 \leq \lambda^2/2$, then we use $h = \lambda^{-1}$ as
our semiclassical parameter, while if $k^2 > \lambda^2 / 2$, we use $h
= |k|^{-1}$.  We get
\[
(-\tDelta - \lambda^2) = \bigoplus_{k^2 \leq \lambda^2/2} \lambda^2 (
\tL_k - z_k) \oplus \bigoplus_{k^2 > \lambda^2/2} k^2 (\tL_k - z_k ).
\]
Here
\[
\tL_k = \begin{cases}
-h^2 \partial_x^2 + \frac{k^2}{\lambda^2} A^{-2}(x) + h^2 V_1(x),
\text{ if } k^2 \leq \lambda^2/2, \\
-h^2 \partial_x^2 + A^{-2}(x) + h^2 V_1(x), \text{ if } k^2 >
\lambda^2 / 2,
\end{cases}
\]
and
\[
z_k = \begin{cases} 1, \text{ if } k^2 \leq \lambda^2/2, \\
k^{-2} \lambda^2 , \text{ if } k^2 > \lambda^2 /2.
\end{cases}
\]
In the case $k^2 \leq \lambda^2/2$, the energy level $z_k$ is 
non-trapping, so by Proposition~\ref{P:ml-res-est}  the operator $\tL_k$ 
satisfies the estimate
\[
\| \chi(\tL_k - z_k )^{-1} \chi \|_{L^2 \to L^2} \leq C h^{-1} = C
\lambda,
\]
with constants uniform as $|k|$ and $\lambda$ both go to infinity.  In
the case $k^2 > \lambda^2/2$, the operator $\tL_k$ satisfies the
estimate
\[
\| \chi (\tL_k - z_k )^{-1} \chi \|_{L^2 \to L^2} \leq C h^{-2m/(m+1)}
= C |k|^{2m/(m+1)},
\]
with constants independent of $\lambda$ as $|k| \to \infty$.  Hence by
orthogonality of the Fourier eigenspaces,
\begin{align*}
\| \chi R(\lambda) \chi \|_{L^2 \to L^2} & \leq \max \{ \max_{|k|^2 \leq
  \lambda^2/2} \lambda^{-2} \| \chi(\tL_k - z_k)^{-1} \chi \|, \\
& \quad \quad \quad \quad
\sup_{|k| > \lambda^2/2} |k|^{-2} \| \chi(\tL_k - z_k)^{-1} \chi \| \}
\\
& \leq \max \{ C \lambda^{-1}, C \sup_{|k|^2 > \lambda^2/2}
|k|^{-2/(m+1)} \} \\
& \leq C \lambda^{-2m/(m+1)}.\qed
\end{align*}

\appendix

\section{Eigenfunction properties}

In this section we recall some standard results on
eigenfunctions.

We consider the classical eigenfunction problem
\[
P u = \lambda u,
\]
where
\[
P = -\partial_x^2 + x^{2m},
\]
with $m \in \ZZ$, $m \geq 2$ and $\lambda \geq 0$.  

The following is a standard result in spectral theory (see, for
example, \cite{ReSi-IV}).
\begin{lemma}
\label{L:ao-ef-prop}
As an operator on $L^2$ with domain $\mathcal{S}(\RR),$ $P$ is
essentially self-adjoint.  It has pure point spectrum $\lambda_j \to
\infty$, $\lambda_0 >0$, and every eigenfunction is of Schwartz class.
\end{lemma}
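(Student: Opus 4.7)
The plan is to establish the four assertions---essential self-adjointness, discreteness of spectrum with $\lambda_j \to \infty$, positivity $\lambda_0 > 0$, and Schwartz regularity of eigenfunctions---in turn, each by a classical technique for confining Schr\"odinger operators; the argument is standard and amounts to assembling results from \cite{ReSi-IV}.

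For essential self-adjointness, the key point is that $V(x) = x^{2m}$ is smooth and bounded below, so the standard theorem for one-dimensional Schr\"odinger operators gives essential self-adjointness on $C_c^\infty(\RR) \subset \mathcal{S}(\RR)$. Equivalently, one checks by a Weyl limit-point analysis at $\pm \infty$ that the ODE $-u'' + x^{2m} u = \pm i u$ has no nonzero $L^2$ solution, which is immediate because the Agmon distance from any finite point to $\pm \infty$ is infinite.

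For discreteness of the spectrum, the natural approach is to show $(P+1)^{-1}$ is compact on $L^2$. The form domain of $P$ is the space of $u \in L^2$ with $\int (|u'|^2 + x^{2m}|u|^2)\, dx < \infty$, and its embedding into $L^2$ is compact: locally this follows from the Rellich--Kondrachov theorem, while the tail bound $\int_{|x| > R} |u|^2\, dx \leq R^{-2m} \int x^{2m} |u|^2 \, dx$ provides the uniform tail decay needed to complete a diagonal-subsequence argument. Given compact resolvent, the spectral theorem yields a discrete eigenvalue sequence $\lambda_j \to \infty$. Positivity $\lambda_0 > 0$ then follows from the integration-by-parts identity $\lambda \|u\|_{L^2}^2 = \int (|u'|^2 + x^{2m} |u|^2)\, dx$ for any eigenfunction: vanishing of $\lambda$ would force $u' \equiv 0$ and $x^m u \equiv 0$, whence $u \equiv 0$.

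The Schwartz property of eigenfunctions is the one step that requires some real analysis. Smoothness is immediate from elliptic regularity applied to $-u'' = (\lambda - x^{2m})u$. For rapid decay, I would use an Agmon estimate: multiplying the eigenvalue equation by $e^{2\phi}\bar u$ with $\phi(x) = (1-\epsilon)\int_0^x (y^{2m}-\lambda)_+^{1/2}\, dy$ and integrating by parts gives a uniform bound on $\|e^\phi u\|_{L^2}$, which translates to pointwise decay of the form $|u(x)| \leq C_\epsilon \exp\bigl(-(1-\epsilon)|x|^{m+1}/(m+1)\bigr)$ for $|x|$ large. Since this is faster than any polynomial, and since differentiating the eigenvalue equation bootstraps the same decay to every derivative $u^{(j)}$, we conclude $u \in \mathcal{S}(\RR)$. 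The main obstacle---really the only non-formal step---is making the Agmon estimate precise, but this is entirely classical.
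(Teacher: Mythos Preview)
Your sketch is correct, and there is nothing to compare against: the paper does not prove this lemma at all but simply declares it ``a standard result in spectral theory'' and cites \cite{ReSi-IV}. The argument you outline---essential self-adjointness from the bounded-below potential (or limit-point at $\pm\infty$), compact resolvent from the confining growth of $x^{2m}$, strict positivity from the quadratic form, and Schwartz decay from Agmon estimates plus bootstrapping---is exactly the standard content behind that citation.
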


We now remark the semiclassical anharmonic oscillator estimate, which
follows from Lemma~\ref{L:ao-ef-prop} via the rescaling
$X=h^{-1/(m+1)} x.$
\begin{lemma}
\label{L:Q-lower-bound}
Let $Q = -h^2 \partial_x^2 + x^{2m}$ for some $m \in \ZZ$, $m \geq
1$.  Then there exists $c = c_{m}>0$ such that for any $u \in L^2( \reals)$,
\[
\| Q u \| \geq c h^{2m/(m+1)} \| u \|.
\]
\end{lemma}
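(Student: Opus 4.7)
The plan is to remove the semiclassical parameter by an anisotropic rescaling $X=h^{-1/(m+1)}x$, so that the estimate reduces to the spectral lower bound for the fixed anharmonic oscillator $P=-\partial_X^2+X^{2m}$ provided by Lemma~\ref{L:ao-ef-prop}.

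Concretely, given $u\in L^2(\reals)$ (which by a density argument we may assume to be in $\s(\reals)$), I would set $v(X)=u(h^{1/(m+1)}X)$ and carry out the change of variables $x=h^{1/(m+1)}X$ directly. A brief computation, using $v''(X)=h^{2/(m+1)}u''(h^{1/(m+1)}X)$, yields
\[
(Qu)(h^{1/(m+1)}X)=h^{2m/(m+1)}(Pv)(X),
\]
since the kinetic term contributes $h^{2}\cdot h^{-2/(m+1)}=h^{2m/(m+1)}$ and the potential contributes $h^{2m/(m+1)}X^{2m}$. Taking $L^{2}$ norms and using $dx=h^{1/(m+1)}dX$, the common Jacobian cancels in the ratio $\|Qu\|/\|u\|$, leaving
\[
\frac{\|Qu\|_{L^2}^{2}}{\|u\|_{L^2}^{2}}=h^{4m/(m+1)}\,\frac{\|Pv\|_{L^2}^{2}}{\|v\|_{L^2}^{2}}.
\]

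By Lemma~\ref{L:ao-ef-prop}, $P$ is essentially self-adjoint on $\s(\reals)$ with pure point spectrum $\{\lambda_j\}$ satisfying $\lambda_j\to\infty$ and $\lambda_0>0$ (positivity follows at once since $\ang{Pv,v}=\|v'\|^{2}+\|X^{m}v\|^{2}\geq 0$, and $0$ cannot be an eigenvalue because $-v''+X^{2m}v=0$ has no $L^{2}$ solution). Consequently $\inf\Spec(P)=\lambda_0>0$, which gives the operator bound $\|Pv\|\geq\lambda_0\|v\|$ for all $v$ in the domain. Substituting this back and taking square roots produces
\[
\|Qu\|_{L^2}\geq \lambda_0\,h^{2m/(m+1)}\|u\|_{L^2},
\]
which is the claimed estimate with $c_{m}=\lambda_0$.

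There is essentially no obstacle here: the only point to be a touch careful about is justifying the rescaling on the full $L^{2}$ domain rather than on $\s(\reals)$, which is immediate since $Q$ is essentially self-adjoint on $\s$ (by the same rescaling applied to Lemma~\ref{L:ao-ef-prop}) and the lower bound $\|Qu\|\geq ch^{2m/(m+1)}\|u\|$ extends by continuity from $\s(\reals)$ to the maximal domain of $Q$, while being trivial on $L^{2}\setminus\mathrm{Dom}(Q)$ where the left-hand side is interpreted as $+\infty$.
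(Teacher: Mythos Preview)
Your proof is correct and is precisely the approach the paper takes: the paper simply remarks that the lemma follows from Lemma~\ref{L:ao-ef-prop} via the rescaling $X=h^{-1/(m+1)}x$, and you have written out this rescaling in full detail. There is nothing to add.
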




\bibliographystyle{alpha}
\bibliography{Wunsch-bib}

\def\cprime{$'$} \def\cftil#1{\ifmmode\setbox7\hbox{$\accent"5E#1$}\else
  \setbox7\hbox{\accent"5E#1}\penalty 10000\relax\fi\raise 1\ht7
  \hbox{\lower1.15ex\hbox to 1\wd7{\hss\accent"7E\hss}}\penalty 10000
  \hskip-1\wd7\penalty 10000\box7}
\begin{thebibliography}{Chr08b}

\bibitem[BBR10]{BBT-lower}
Jean-Fran{\c{c}}ois Bony, Nicolas Burq, and Thierry Ramond.
\newblock Minoration de la r\'esolvante dans le cas captif.
\newblock {\em C. R. Math. Acad. Sci. Paris}, 348(23-24):1279--1282, 2010.

\bibitem[Bur04]{Bur-sm}
N.~Burq.
\newblock Smoothing effect for {S}chr\"odinger boundary value problems.
\newblock {\em Duke Math. J.}, 123(2):403--427, 2004.

\bibitem[Chr07]{Chr-NC}
Hans Christianson.
\newblock Semiclassical non-concentration near hyperbolic orbits.
\newblock {\em J. Funct. Anal.}, 246(2):145--195, 2007.

\bibitem[Chr08a]{Chr-sch2}
Hans Christianson.
\newblock Cutoff resolvent estimates and the semilinear {S}chr{\"o}dinger
  equation.
\newblock {\em Proc. Amer. Math. Soc.}, 136:3513--3520, 2008.

\bibitem[Chr08b]{Chr-disp-1}
Hans Christianson.
\newblock Dispersive estimates for manifolds with one trapped orbit.
\newblock {\em Comm. Partial Differential Equations}, 33:1147--1174, 2008.

\bibitem[Chr11]{Chr-QMNC}
Hans Christianson.
\newblock Quantum monodromy and non-concentration near a closed semi-hyperbolic
  orbit.
\newblock {\em Trans. Amer. Math. Soc.}, 363(7):3373--3438, 2011.

\bibitem[CS88]{ConSau}
Peter Constantin and Jean-Claude Saut.
\newblock Local smoothing properties of dispersive equations.
\newblock {\em J. Amer. Math. Soc.}, 1(2):413--439, 1988.

\bibitem[Dat09]{Dat-sm}
Kiril Datchev.
\newblock Local smoothing for scattering manifolds with hyperbolic trapped
  sets.
\newblock {\em Comm. Math. Phys.}, 286(3):837--850, 2009.

\bibitem[Doi96]{Doi}
Shin-ichi Doi.
\newblock Smoothing effects of {S}chr\"odinger evolution groups on {R}iemannian
  manifolds.
\newblock {\em Duke Math. J.}, 82(3):679--706, 1996.

\bibitem[DV]{DaVa-gluing}
Kiril Datchev and Andr\'as Vasy.
\newblock Gluing semiclassical resolvent estimates via propagation of
  singularities.
\newblock {\em Int. Math. Res. Not., to appear}.

\bibitem[Ika82]{Ika-2-wd}
Mitsuru Ikawa.
\newblock Decay of solutions of the wave equation in the exterior of two convex
  obstacles.
\newblock {\em Osaka J. Math.}, 19(3):459--509, 1982.

\bibitem[Ika88]{Ika-3-wd}
Mitsuru Ikawa.
\newblock Decay of solutions of the wave equation in the exterior of several
  convex bodies.
\newblock {\em Ann. Inst. Fourier (Grenoble)}, 38(2):113--146, 1988.

\bibitem[Kat66]{Ka-wos}
Tosio Kato.
\newblock Wave operators and similarity for some non-selfadjoint operators.
\newblock {\em Math. Ann.}, 162:258--279, 1965/1966.

\bibitem[Kat83]{Kato}
Tosio Kato.
\newblock On the {C}auchy problem for the (generalized) {K}orteweg-de {V}ries
  equation.
\newblock {\em Studies in Applied Mathematics: a volume dedicated to Irving
  Segal}, 8:93--128, 1983.

\bibitem[KY89]{KaYa-smooth}
Tosio Kato and Kenji Yajima.
\newblock Some examples of smooth operators and the associated smoothing
  effect.
\newblock {\em Rev. Math. Phys.}, 1(4):481--496, 1989.

\bibitem[NZ09]{NoZw-res}
St{\'e}phane Nonnenmacher and Maciej Zworski.
\newblock Semiclassical resolvent estimates in chaotic scattering.
\newblock {\em Appl. Math. Res. Express. AMRX}, (1):74--86, 2009.

\bibitem[RS78]{ReSi-IV}
Michael Reed and Barry Simon.
\newblock {\em Methods of modern mathematical physics. {IV}. {A}nalysis of
  operators}.
\newblock Academic Press [Harcourt Brace Jovanovich Publishers], New York,
  1978.

\bibitem[Sj{\"o}87]{Sjolin}
Per Sj{\"o}lin.
\newblock Regularity of solutions to the {S}chr\"odinger equation.
\newblock {\em Duke Math. J.}, 55(3):699--715, 1987.

\bibitem[SZ91]{SjZw}
Johannes Sj{\"o}strand and Maciej Zworski.
\newblock Complex scaling and the distribution of scattering poles.
\newblock {\em J. Amer. Math. Soc.}, 4(4):729--769, 1991.

\bibitem[SZ02]{SjZw-mono}
Johannes Sj{\"o}strand and Maciej Zworski.
\newblock Quantum monodromy and semi-classical trace formulae.
\newblock {\em J. Math. Pures Appl. (9)}, 81(1):1--33, 2002.

\bibitem[SZ07]{SjZw-frac}
Johannes Sj{\"o}strand and Maciej Zworski.
\newblock Fractal upper bounds on the density of semiclassical resonances.
\newblock {\em Duke Math. J.}, 137(3):381--459, 2007.

\bibitem[Tao06]{Tao-book}
Terence Tao.
\newblock {\em Nonlinear dispersive equations}, volume 106 of {\em CBMS
  Regional Conference Series in Mathematics}.
\newblock Published for the Conference Board of the Mathematical Sciences,
  Washington, DC, 2006.
\newblock Local and global analysis.

\bibitem[Veg88]{Vega}
Luis Vega.
\newblock Schr\"odinger equations: pointwise convergence to the initial data.
\newblock {\em Proc. Amer. Math. Soc.}, 102(4):874--878, 1988.

\bibitem[WZ00]{WZ}
Jared Wunsch and Maciej Zworski.
\newblock Distribution of resonances for asymptotically {E}uclidean manifolds.
\newblock {\em J. Differential Geom.}, 55(1):43--82, 2000.

\bibitem[WZ10]{WuZw10}
Jared Wunsch and Maciej Zworski.
\newblock Resolvent estimates for normally hyperbolic trapped sets.
\newblock {\em preprint}, 2010.

\end{thebibliography}

\end{document}